\crefname{section}{Section}{Sections}
\crefname{subsection}{\S}{\S\S}
\theoremstyle{plain}
\newtheorem{lemma}{Lemma}[section]
\newtheorem{proposition}[lemma]{Proposition}
\newtheorem{corollary}[lemma]{Corollary}
\newtheorem{theorem}[lemma]{Theorem}
\theoremstyle{nonumberplain}
\theoremstyle{plain}
\newtheorem{definition}[lemma]{Definition}
\newtheorem{remark}[lemma]{Remark}
\crefname{definition}{definition}{definitions}
\crefname{ex}{example}{examples}
\crefname{remark}{remark}{remarks}
\crefname{convention}{convention}{conventions}
\crefname{lemma}{lemma}{lemmas}
\crefname{proposition}{proposition}{propositions}
\crefname{corollary}{corollary}{corollaries}
\crefname{theorem}{theorem}{theorems}
\crefname{assumption}{assumption}{Assumptions}
\crefname{equation}{}{}
\theoremstyle{nonumberplain}
\newtheorem{proof}{Proof}
\newcommand{\BH}{B(H)}
 \newcommand{\BK}{B(K)}
\newcommand\bC{\mathbb C}
\newcommand\bG{\mathbb G}
\newcommand\bZ{\mathbb Z}
\newcommand\cM{\mathcal M}
\newcommand\cU{\mathcal U}
\newcommand\cV{\mathcal V}
\newcommand\G{\Gamma}
\newcommand{\Aut}{\operatorname{Aut}}
\newcommand{\Inn}{\operatorname{Inn}}
\newcommand{\Out}{\operatorname{Out}}
\newcommand{\norm}[1]{\left\Vert#1\right\Vert}
\newcommand{\abs}[1]{\left\vert#1\right\vert}
\DeclareMathOperator{\out}{\mathrm{Out}}
\DeclareMathOperator{\aut}{\mathrm{Aut}}
\newcommand{\cat}[1]{\textsc{#1}}
\newcommand{\qedhere}{\mbox{}\hfill\ensuremath{\blacksquare}}
\numberwithin{equation}{section}
\title{Topological automorphism groups of compact quantum groups}
\author{Alexandru Chirvasitu\quad and\quad  Issan Patri}
\begin{document}

\date{}

\newcommand{\Addresses}{{% additional braces for segregating \footnotesize
  \bigskip
  \footnotesize

  \textsc{Department of Mathematics, University
    of Washington, Seattle, WA 98195-4350, USA}\par\nopagebreak
  \textit{E-mail address}: \texttt{chirva@uw.edu}

  \medskip

  \textsc{Chennai Mathematical Institute, SIPCOT IT Park, Siruseri, Chennai, India}\par\nopagebreak
  \textit{E-mail address}: \texttt{ipatri@cmi.ac.in}
}}

\maketitle

\begin{abstract}
  We study the topological structure of the automorphism groups of
  compact quantum groups showing that, in parallel to a classical
  result due to Iwasawa, the connected component of identity of the automorphism
  group and of the ``inner'' automorphism group coincide.

  For compact matrix quantum groups, which can be thought of as quantum analogues of compact {\it Lie} groups, we prove that the
  inner automorphism group is a compact Lie group and the outer
  automorphism group is discrete. Applications of this to the study of
  group actions on compact quantum groups are highlighted.

  We end with the construction of a compact matrix quantum group whose
  fusion ring is not finitely generated, unlike the classical case. 
\end{abstract}

\noindent {\em Key words: automorphism group, outer automorphism
  group, fusion ring, compact quantum group, dynamical system}

\vspace{.5cm}

\noindent{MSC 2010: 46L52; 46L55; 46L85; 16T05}

%\tableofcontents

%%%%%%%%%%%%%%%%%%%%%%%%%%%%%%%%%%%%%%%%%%%%%%%%%%%%%%%%%%%%%%%%%%%%%%%%%%%%%%%%%%%%%%%%%%%
%%%%%%%%%%%%%%%%%%%%%%%%%%%%%%%%%%%%%%%%%%%%%%%%%%%%%%%%%%%%%%%%%%%%%%%%%%%%%%%%%%%%%%%%%%%
\section*{Introduction}

Compact quantum groups \cite{Wor98} as introduced in \cite{Wor87} (at
the time under different terminology) are objects meant to capture the
concept of ``quantum symmetry'' for various geometric or algebraic
structures, and their construction in practice as quantum automorphism
groups of the latter reflects this perspective.

This procedure has proven very flexible and fruitful, and the
structures amenable to this type of treatment (and which hence give
rise to interesting examples of compact quantum groups) include finite
sets or possible non-commutative measure spaces \cite{Wan98}, finite
graphs \cite{Bic03}, metric spaces \cite{Ban05,Gos12} or Riemannian
manifolds \cite{BhoGos09,gj}, to name only a few examples.

Actions of quantum groups on operator algebras (often equipped with
various additional structures), as a generalization of actions of
groups on operator algebras, can be regarded as non-commutative
dynamical systems, with the dynamics in this case being encoded by the
quantum group action. Here, we propose to study compact quantum groups
themselves as dynamical objects, by investigating their own
(classical) automorphism groups.

This line of inquiry continues the work of \cite{iss,MP16} and in
pursuing it, we have several aims.

First, these investigations fit well with the point of view, prevalent
in the quantum group literature, that understanding the symmetries of
a mathematical object (in our case a compact quantum group) sheds
light on the underlying mathematical structure. For instance, the
orbit-boundedness-type result that constitutes \Cref{VInn} can be
regarded as witnessing a type of ``rigidity'' for sufficiently small
compact quantum groups (see also the discussion below, in the present
section). Besides, a good understanding of the classical automorphism group of a compact quantum group would give insights towards understanding ``quantum'' automorphism groups of compact quantum groups. Attempts have been made towards defining such a notion \cite{BSS15, KSW15, KSS16}.

Another source of motivation for us is the connection, in the
classical case, between the inner / outer distinction for
automorphisms of a compact group and the representation theory of the
latter; see e.g. \cite{Han93,ydv} and references in the latter. Since
compact quantum groups are essentially representation-theoretic
objects (being defined, in one incarnation, as Hopf algebras that
mimic algebras of matrix coefficients for finite-dimensional
representations), this connection seems ripe for investigation in the
quantum setting.

Finally, dynamics on compact groups is a rich topic in ergodic
theory. Though we cannot possibly do it justice in this short
introduction (see e.g. \cite{hlm,ktz,ktch,ktch-sch,jw95,jw12,htz} and
references cited there), we hope that the present work draws some
analogies between that field and an incipient quantum analogue thereof
by regarding compact quantum groups as non-commutative spaces being
acted upon by structure-preserving transformations.

The paper is organized as follows.

In \Cref{se.prel} we recall some of the background needed later, such
as definitions, conventions, and so on.

\Cref{se.auto} is devoted to the main results of the paper. We
introduce for each compact quantum group $G$ its group of
automorphisms $\Aut(G)$, and study the topology and structure of the
latter. In first instance, we will see
(\Cref{pr.same-top,re.tops,cor.same-top}) that there is essentially
one reasonable way of topologizing $\Aut(G)$.

As for structure theory, we express $\Aut(G)$ as an extension of the
{\it outer} automorphism group $\out(G)$ by the group $\Inn(G)$ of
{\it inner} automorphisms and prove in \Cref{th.iwa} that the full
automorphism group and the inner one share the same connected
component of the identity. Moreover, \Cref{th.out} shows that for
compact matrix quantum groups (typically regarded as non-commutative
analogues of compact Lie groups) the outer automorphism group is
discrete; this mimics the case of classical compact Lie groups.

In \Cref{se.appl} we apply some of the contents of the preceding
section to the study of dynamics on compact quantum groups, the main
result (\Cref{VInn}, paraphrased here) being that if a discrete group acts
{\it compactly} on a compact matrix quantum group $G$, the resulting
action on the dual object of irreducible representations of $G$ has
uniformly bounded orbits. Next, in \Cref{Outfin} using dynamical methods, we show that for a large family of compact matrix quantum groups, the outer automorphism group is finite and the automorphism group is a compact Lie group.

 Finally, in \Cref{se.counter} we present some examples of compact
 matrix quantum groups whose tensor categories of representations have
 infinitely-generated representation rings. This is in contrast to the classical case of compact Lie groups, for which it is well know that the representation rings are finitely generated \cite[Corollary~3.3]{Se68}.

 The relevance of this latter topic to the present work is that the
 question of whether or not such examples exist is motivated by the
 structure theory of the automorphism groups of compact matrix quantum
 groups (see \Cref{fin}).

%%%%%%%%%%%%%%%%%%%%%%%%%%%%%%%%%%%%%%%%%%%%%%%%%%%%%%%%%%%%%%%%%%%%%%%%%%%%%%%%%%%%%%%%%%%
\subsection*{Acknowledgements}
This work was initiated at the 7th ECM satellite conference ``Compact
Quantum Groups'' at Greifswald, Germany and the authors are grateful
to the organizers, Uwe Franz, Malte Gerhold, Adam Skalski and Moritz
Weber, for the invitation. The authors would also like to thank Yuki
Arano and Makoto Yamashita for interesting discussions.

The first author was partially supported by NSF grant DMS - 1565226.

%%%%%%%%%%%%%%%%%%%%%%%%%%%%%%%%%%%%%%%%%%%%%%%%%%%%%%%%%%%%%%%%%%%%%%%%%%%%%%%%%%%%%%%%%%%
%%%%%%%%%%%%%%%%%%%%%%%%%%%%%%%%%%%%%%%%%%%%%%%%%%%%%%%%%%%%%%%%%%%%%%%%%%%%%%%%%%%%%%%%%%%
\section{Preliminaries}\label{se.prel}

In this section, we give a quick introduction to compact quantum groups and CQG algebras. We refer to \cite{Wor87}, \cite{Wor98} and \cite{DK} for more details.

A compact quantum group $G = (A_{t}, \Phi)$ is a unital $C^{\ast}$-algebra  $A_t$ together with a comultiplication 
 $\Delta: A_t \to A_t \otimes A_t$ which is a coassociative $*$-homomorphism,  
such that the closure of the linear span of the sets $(A_t \otimes 1)\Delta(A_t)$ and $(1 \otimes A_t)\Delta(A_t)$ 
are dense in $A_t \otimes A_t$ in norm.  Let $H$ be a finite dimensional Hilbert space with an orthonormal basis given by 
$\{e_1, e_2, \cdots, e_n\}$ and with $e_{ij}$, $1\leq i,j\leq n$, the corresponding system of matrix units in 
$B(H)$. A unitary $u=\sum_{i,j}e_{ij}\otimes u_{ij}\in B(H)\otimes A_t$ 
is said to be a finite dimensional representation of $G$, if 
$\Delta(u_{ij})=\sum_k u_{ik}\otimes u_{kj}$ for all $1\leq i,j\leq n$. Given two finite-dimensional representations $u\in \BH\otimes A_t$ and $v\in \BK\otimes A_t$, an operator $T\in B(H,K)$ is said to be an intertwiner of $u$ and $v$ if $(T\otimes id)u=v(T\otimes id)$. Two 
representations $u\in \BH\otimes A_t$ and $v\in \BK\otimes A_t$ are said to be 
equivalent if there exists an unitary intertwiner between them. A representation is said to be irreducible if the only self-intertwiners are the trivial ones. We denote the set of equivalence classes of irreducible representations of $G$ by Irr$(G)$. 

A compact matrix quantum group is a compact quantum group $G$ for which there exists a finite dimensional representation $u\in B(H)\otimes A_t$, with $u=\sum_{i,j}e_{ij}\otimes u_{ij}$, such that the elements $u_{ij}$ generate $A_t$ as a $C^\ast$-algebra. This is a generalization of the notion of compact lie groups (since for a compact lie group $G$, there always exists a faithful finite dimensional representation and thus, its coefficient functions generate $C(G)$ as a $C^\ast$-algebra).

Just like in the classical case of compact groups, for the case of a compact quantum group $G$ as well, there exists a canonical Hopf $\ast$-algebra $A$ which is a dense subalgebra of $A_t$ and is spanned by the matrix coefficients of the irreducible representations of $G$. This Hopf $\ast$-algebra is referred to as the CQG algebra associated to $G$. If $G$ is a compact matrix quantum group, we will refer to $A$ as a CMQG algebra. It is also possible to traverse the converse route, whence starting with a CQG algebra $A$, which is defined as a Hopf algebra with some additional properties, one can then go on to associate a compact quantum group $G$ to this algebra \cite{DK}. 

We now consider an example, which will be important for us in the sequel. Let $n\ge 2$ be a positive integer, and recall the Hopf $*$-algebra
$A_u(n)$ that is the function algebra of the so-called free unitary
group $U_n^+$, as introduced in \cite{Wan95_1} (and in more general
form in \cite{DaeWan96}). As a $*$-algebra, it is generated freely by
$u_{ij}$ for $1\le i,j\le n$ subject to the relations making
\begin{equation*}
  (u_{ij})_{i,j},\ (u_{ij}^*)_{i,j}\ \in\ M_n(A_u(n))
\end{equation*}
unitary.

The $*$-structures will not play a major role here, owing to the fact
that Hopf subalgebras and quotient Hopf algebras of CQG algebras
(\cite{DK}) are automatically CQG subalgebras and quotients
respectively. In other words, so long as morphisms preserve the Hopf
structures, the $*$ structures come along for the ride.

%%%%%%%%%%%%%%%%%%%%%%%%%%%%%%%%%%%%%%%%%%%%%%%%%%%%%%%%%%%%%%%%%%%%%%%%%%%%%%%%%%%%%%%%%%%
%%%%%%%%%%%%%%%%%%%%%%%%%%%%%%%%%%%%%%%%%%%%%%%%%%%%%%%%%%%%%%%%%%%%%%%%%%%%%%%%%%%%%%%%%%%
\section{Automorphism groups of compact quantum groups}\label{se.auto}

In this section we study the topological structure of the automorphism
group of compact quantum groups in general and of compact matrix
quantum groups in particular. Before we move on to the main results of
the section, let us recall some background on the topology of the
automorphism group of a compact quantum group.

Let $A$ be a CQG algebra underlying a compact quantum group
$G$. Recall that $\Aut(G)$ (or $\Aut(A)$) is defined in \cite{iss}
as the group of coproduct-preserving automorphisms of the full
enveloping $C^*$-algebra $\overline{A}$. Such automorphisms of compact quantum groups were first studied by Wang \cite{Wan95_2}. As noted there, restricting
automorphisms to the dense Hopf $*$-subalgebra $A$ of $\overline{A}$
induces a bijection onto the group of Hopf $*$-algebra automorphisms
of $A$. Moreover, this also bijects onto the group of
coproduct-preserving automorphisms of the {\it reduced} $C^*$ envelope
$\overline{A}_r$ of $A$ (see discussion preceding \cite[Lemma
3.3]{iss}). We will henceforth freely identify these groups.

Recall furthermore that $\Aut(A)$ is topologized in \cite{iss}
(discussion preceding Theorem 3.4) via the identity neighborhoods
\begin{equation}\label{eq:def-top}
  U(a_1,\cdots,a_n;\varepsilon) = \{\alpha\in Aut(A)\ |\ \|\alpha(a_i)-a_i\|<\varepsilon,\ \forall i\},
\end{equation}
where the norm used is that on $\overline{A}$.

\begin{remark}
  This is initially defined for tuples $\{a_i\}$ in $\overline{A}$,
  but \cite[Lemma 3.5]{iss} ensures it is enough to consider elements
  of the dense $*$-subalgebra $A$ of $\overline{A}$.
\end{remark}

We will take here a slightly different approach to topologizing
$\Aut(A)$, which will make it clear that in \Cref{eq:def-top} we may
as well have used the reduced $C^*$-norm of $\overline{A}_r$, or the
$L^2$ norm associated to the Haar state, or the $L^p$ norm for some
$p\ge 1$, etc. In other words, $\Aut(G)$ is, not just as an abstract group, but even as a topological group, is determined by the quantum group structure of $G$, independent of the norm completion.

Consider an automorphism $\alpha$ of the Hopf $*$-algebra $A$. It will
permute the simple subcoalgebras of $A$ and hence induce a permutation
of the set Irr$(G)$ of irreducible $A$-comodules. This provides a morphism
\begin{equation*}
  \Aut(A) \to S_{\textrm{Irr}(G)},
\end{equation*}
where the latter symbol denotes the symmetric group on the set 
Irr$(G)$. The kernel of this homomorphism consists of precisely
those automorphisms of $A$ that preserve the class of each irreducible
$\bG$-representation, i.e. the normal subgroup
\begin{equation*}
  \Aut_\chi(A)\trianglelefteq \Aut(A)
\end{equation*}
of {\it inner} automorphisms of \cite{iss} (definition preceding
Theorem 3.4). Note that every element of $\Aut_\chi(A)$ is uniquely
determined by its effect on the simple subcoalgebras of $A$. Since the
automorphism group of each such $n\times n$ matrix subcoalgebra is the
projective $n\times n$ linear group, we have an embedding
\begin{equation*}
  \Aut_\chi(A)\subset \prod_{V\in \textrm{Irr}(G)}PGL(V). 
\end{equation*}
All in all, we obtain an embedding
\begin{equation}\label{eq:wr}
  \Aut(A)\subseteq \left(\prod_{V\in\textrm{Irr}(G)}PGL(V)\right)\rtimes S_{\textrm{Irr(G)}}. 
\end{equation}
The right hand side of \Cref{eq:wr} has a natural topological group
structure defined as the product topology on the set
\begin{equation*}
  \left(\prod PGL(V)\right)\times S_{\textrm{Irr}(G)},
\end{equation*}
where the projective general linear groups have their standard Lie
group topologies and the symmetric group $S_{\textrm{Irr}(G)}$ is
topologized via the compact-open topology resulting from the discrete
topological space structure of $\textrm{Irr}(G)$ (this, for instance, is
the topology on a possibly infinite symmetric group used in
\cite[$\S$4]{bs}).

The following result ensures that the topology on $\Aut(A)$ defined
via the neighborhoods \Cref{eq:def-top} is unambiguous with respect to
the norm used therein by identifying it with the topology inherited
from the embedding \Cref{eq:wr}. 

\begin{proposition}\label{pr.same-top}
  Let $A$ be a CQG algebra, and $\|\cdot\|$ a norm on $A$ making the
  latter into a normed complex vector space such that the Haar state
  $h:A\to \bC$ is continuous. Then, the topology on $\Aut(A)$ defined
  via the identity neighborhoods
  \begin{equation}
    \label{eq:def-top-bis}
    U(a_1,\cdots,a_n;\varepsilon)  = \{\alpha\in \Aut(A)\ |\ \|\alpha(a_i)-a_i\|<\varepsilon,\ \|\alpha^{-1}(a_i)-a_i\|<\varepsilon, \forall a_i\}
  \end{equation}
  for various tuples $\{a_i\}\subset A$ and $\varepsilon>0$ coincides
  with the subspace topology inherited from \Cref{eq:wr}. 
\end{proposition}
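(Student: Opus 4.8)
The plan is to compare the two topologies through the Peter--Weyl decomposition $A=\bigoplus_{V\in\textrm{Irr}(G)}C(V)$ into simple subcoalgebras, identifying the subspace topology from \Cref{eq:wr} with pointwise convergence on $A$ and then matching it against the norm topology \Cref{eq:def-top-bis}. Both families of identity neighborhoods define translation-invariant (group) topologies, so it suffices to compare the two neighborhood bases at $\id$. Writing an automorphism $\alpha$ as the pair consisting of its induced permutation $\sigma_\alpha\in S_{\textrm{Irr}(G)}$ and, block by block, its $PGL(V)$-data, convergence in \Cref{eq:wr} means precisely that the permutations eventually stabilize on each fixed $V$ and that the projective linear data converge on each block. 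Since each $C(V)$ is finite dimensional and all vector-space norms on a finite-dimensional space are equivalent, I would first dispose of the inclusion asserting that every neighborhood \Cref{eq:def-top-bis} contains a subspace-topology neighborhood of $\id$: given a tuple $\{a_i\}$, these elements lie in a finite sum $\bigoplus_{V\in F}C(V)$, and requiring $\sigma_\alpha$ to fix $F$ pointwise while keeping the $PGL(V)$-components ($V\in F$) close to the identity keeps both $\alpha(a_i)$ and $\alpha^{-1}(a_i)$ within $\varepsilon$ of $a_i$. Note this half uses only finite-dimensionality, not the Haar state.

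For the reverse inclusion I would show that each basic neighborhood of the identity in \Cref{eq:wr} — fix a finite $F\subset\textrm{Irr}(G)$, demand $\sigma_\alpha|_F=\id$ and the $PGL(V)$-components in prescribed neighborhoods of $1$ for $V\in F$ — contains some set \Cref{eq:def-top-bis}. This splits into two tasks: (i) forcing $\sigma_\alpha$ to fix each $V\in F$, and (ii), once the relevant blocks are known to be preserved, forcing the $PGL(V)$-action to be near the identity. Task (ii) is again finite-dimensional: with $\sigma_\alpha(V)=V$ the restriction $\alpha|_{C(V)}$ is conjugation by an element of $PGL(V)$, the assignment from that element to its action on $C(V)$ is a homeomorphism onto its image, and testing \Cref{eq:def-top-bis} on a basis of $C(V)$ pins the projective data down.

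The crux is task (i), controlling the permutation. Here I would use the characters $\chi_V=\sum_i u^V_{ii}$, which satisfy $\alpha(\chi_V)=\chi_{\sigma_\alpha(V)}$ because conjugating a representation matrix leaves its trace unchanged, together with the Schur orthogonality relations $h(\chi_V\chi_W^*)=\delta_{VW}$ for the Haar state. The point is that $h(\chi_{\sigma_\alpha(V)}\,\chi_{\bar V})=\delta_{\sigma_\alpha(V),V}$ reads off, via the continuous functional $h$, the trivial-isotypic component that distinguishes $\sigma_\alpha(V)=V$ from $\sigma_\alpha(V)\neq V$; so a sufficiently small constraint $\|\alpha(\chi_V)-\chi_V\|<\varepsilon$ ought to force $\sigma_\alpha(V)=V$. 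In the Hilbert-space case this is transparent: for the $L^2$-norm attached to $h$ the characters are orthonormal, hence $\|\chi_W-\chi_V\|_2=\sqrt2$ whenever $W\neq V$, so the single test element $\chi_V$ with any $\varepsilon<\sqrt2$ already does the job, and one test element per $V\in F$ suffices.

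The main obstacle is precisely making task (i) work for an arbitrary norm in which $h$ — but not necessarily multiplication by a fixed element such as $\chi_{\bar V}$ — is continuous: a priori the characters $\chi_W$ could accumulate near $\chi_V$ in $\|\cdot\|$, so no single $\varepsilon$ separates $\chi_V$ from all competitors, and the clean pairing argument above does not immediately transfer from the $L^2$ norm. I expect this to be resolved not by separating characters one at a time, but by exploiting the rigidity of the induced permutations together with the invariance $h\circ\alpha=h$: if a net $\alpha_\lambda\to\id$ in \Cref{eq:def-top-bis} had $\sigma_{\alpha_\lambda}(V)=W_\lambda$ escaping to infinity in $\textrm{Irr}(G)$, then applying the convergence simultaneously to all fixed polynomials in $\chi_V$ and $\chi_{\bar V}$ constrains the escaping orbit through the fusion rules, and a fixed auxiliary character acting as a shift on that orbit would force one norm-convergent sequence to acquire two distinct limits — a contradiction. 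Assembling (i) and (ii) over the finite set $F$ then yields the required neighborhood \Cref{eq:def-top-bis}, and the two topologies coincide.
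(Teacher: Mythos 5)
Your proposal has the same skeleton as the paper's proof: the inclusion of the norm topology in the embedding topology via finite\-/dimensionality of coefficient coalgebras, and the reverse inclusion split into (i) forcing the induced permutation of $\textrm{Irr}(G)$ to fix the relevant classes and (ii) pinning down the $PGL(V)$-data. Your treatment of the easy inclusion and of task (ii) is correct and essentially identical to the paper's. The genuine gap is task (i) --- exactly the step you yourself flag as ``the main obstacle'' --- which you never prove: the closing paragraph (``I expect this to be resolved\dots would force one norm-convergent sequence to acquire two distinct limits'') is a hope, not an argument. Worse, it cannot be turned into an argument, because under the literal hypotheses (an arbitrary vector-space norm for which only $h$ is assumed continuous) the proposition is \emph{false}. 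Note in particular that your ``fixed auxiliary character acting as a shift'' is multiplication by a fixed element of $A$, and norm-continuity of such multiplication operators is precisely what the hypotheses do not provide.

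Here is a counterexample. Let $\Gamma=\bigoplus_{i\ge 1}\bZ$ with basis $x_1,x_2,\dots$ and let $A=\bC[\Gamma]$ be its group algebra, i.e.\ the CQG algebra of the compact group $\widehat{\Gamma}\cong\bT^{\infty}$; then $\textrm{Irr}(G)=\Gamma$ via the group-likes $u_\gamma$, every $PGL(V)$ is trivial, and $h(u_\gamma)=\delta_{\gamma,0}$. Define a linear map $J:A\to\ell^1(\Gamma)$ on the basis by $J(u_\gamma)=e_{\kappa(\gamma)}+\tfrac{1}{n}e_\gamma$ whenever $\gamma\neq 0$ has vanishing $x_1$-coefficient, where $n$ is the largest index in the support of $\gamma$ and $\kappa(\gamma)$ is obtained from $\gamma$ by moving its $x_n$-coefficient to the $x_1$-slot, and $J(u_\gamma)=e_\gamma$ otherwise. $J$ is injective by a triangularity check: every $\kappa(\gamma)$ has nonzero $x_1$-coefficient, so for $\beta$ with vanishing $x_1$-coefficient the $e_\beta$-coefficient of $J(a)$ is a nonzero multiple of the $u_\beta$-coefficient $a_\beta$ of $a$, and the remaining coefficients are then recovered. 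Hence $\|a\|:=\|J(a)\|_{\ell^1}$ is a norm, and since the $e_0$-coefficient of $J(a)$ is exactly $a_0=h(a)$, the Haar state is $\|\cdot\|$-continuous. Now let $\sigma_m\in\Aut(A)$ be induced by the group automorphism swapping $x_1\leftrightarrow x_m$. For fixed $\gamma$ and any $m$ beyond the support of $\gamma$ one checks $\|\sigma_m^{\pm1}(u_\gamma)-u_\gamma\|\le 1/m$, so every neighborhood \Cref{eq:def-top-bis} contains all but finitely many $\sigma_m$; yet $\sigma_m[u_{x_1}]=[u_{x_m}]\neq[u_{x_1}]$ for all $m\ge 2$, so the \Cref{eq:wr}-open set $\{\alpha\ |\ \alpha[u_{x_1}]=[u_{x_1}]\}$ contains no such neighborhood, and the two topologies differ. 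For what it is worth, your diagnosis is sharper than the paper's own proof, which is too quick at the very same spot: it infers from ``continuity of $h$'' that $h(\alpha(\chi_V)\chi_V^*)$ is close to $h(\chi_V\chi_V^*)$, which actually requires continuity of the functional $a\mapsto h(a\chi_V^*)$; in the example above that functional is discontinuous ($h(u_{x_m}u_{x_1}^*)=0$ for all $m\ge 2$ while $u_{x_m}\to u_{x_1}$ and $h(u_{x_1}u_{x_1}^*)=1$). Both your task (i) and the paper's argument do become correct if one strengthens the hypothesis to continuity of $a\mapsto h(ab)$ for each fixed $b\in A$ (for instance if $\|\cdot\|$ is a normed-$*$-algebra norm); this holds for the full and reduced $C^*$-norms and for the $L^p$-norms by Cauchy--Schwarz/H\"older, so \Cref{cor.same-top} and the paper's subsequent uses are unaffected.
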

\begin{proof}
  Let us denote the topology coming from \Cref{eq:wr} by
  $\cat{top}_{emb}$ (for {\it embedding}) and the topology defined via
  \Cref{eq:def-top-bis} by $\cat{top}_{\|\cdot\|}$. We want to argue
  that each contains the other. Since both make $\Aut(A)$ into a
  topological group, this is equivalent to proving that open identity
  neighborhoods with respect to each topology contain open identity
  neighborhoods with respect to the other; we will do this separately
  for the two inclusions.

  \vspace{.5cm}

  {\bf (1):} $\cat{top}_{emb}\subseteq \cat{top}_{\|\cdot\|}$. Let
  $U\subseteq \Aut(A)$ be an identity neighborhood in
  $\cat{top}_{emb}$. Fix a simple $A$-comodule $V$, and denote by
  $C\subset A$ its coefficient coalgebra. Then we may as well assume
  $U$ is of the form
  \begin{equation*}
    U=\{\alpha\in \Aut(A)\ |\ \alpha(C)=C,\ \alpha|_C\in \cU\subset PGL(V)\cong \Aut(C)\},
  \end{equation*}
  where $\cU$ is a neighborhood of the identity in the Lie group
  $PGL(V)$. Indeed, intersections of such open identity neighborhoods
  in $\cat{top}_{emb}$ form a basis (around $1\in \Aut(A)$) for the latter
  topology.

  Let $\chi_V\in C$ be the character of $V$. Then we claim that if
  $\varepsilon>0$ is small enough, then $U(\chi_V;\varepsilon)$
  (defined as in \Cref{eq:def-top-bis}) consists of elements
  $\alpha\in\Aut(A)$ with $\alpha(C)=C$ (or equivalently,
  $\alpha(\chi_V)=\chi_V$). Indeed, $\alpha(\chi_V)$ is a character
  $\chi_W$ for some irreducible $A$-comodule $W$, and the conclusion that $W\cong V$ for sufficiently small $\varepsilon$ follows from the continuity of $h$ and the fact that
  \begin{equation*}
    h(\chi_V\chi_W^*)\in \{0,1\},
  \end{equation*}
  with $1$ if $W\cong V$ and $0$ otherwise.

  Finally, having chosen $\varepsilon$ small enough to ensure that for any $\alpha\in U(\chi_V;\varepsilon), \alpha(C)=C$, we can shrink it further to make sure that for a basis $u_{ij}$, $1\le i,j\le n$ of $C$ the restriction of every
  \begin{equation*}
    \alpha\in U(u_{ij},\ 1\le i,j\le n; \varepsilon)
  \end{equation*}
  to $C$ is contained in the neighborhood $\cU\subset PGL(V)$. Indeed,
  this follows from the fact that since $C$ is finite-dimensional, all
  topologies making it into a complex topological vector space
  coincide.

  \vspace{.5cm}

  {\bf (2):} $\cat{top}_{\|\cdot\|}\subseteq \cat{top}_{emb}$. It is
  sufficient to select an open set of the form \Cref{eq:def-top-bis}
  with $n=1$, and $a=a_1\in A$ belonging to some simple subcoalgebra
  $C\subset A$. Clearly then, for a small enough neighborhood $\cU$ of
  the identity in $PGL(C)$, the set
  \begin{equation*}
    \{\alpha\in \Aut(A)\ |\ \alpha(C)=C,\ \alpha|_C\in \cU\subset PGL(V)\cong \Aut(C)\}
  \end{equation*}
  used in the proof of part (1) will be contained in
  $U(a;\varepsilon)$.

  This finishes the proof of part (2) and of the proposition.
\end{proof}

\begin{remark}\label{re.tops}
  The proof of \Cref{pr.same-top} given above extends to prove a
  stronger statement applying to topologies on $\Aut(A)$ defined analogously to \Cref{eq:def-top-bis}, via neighborhoods of the form
  \begin{equation*}
    U(a_i;\cU) = \{\alpha\in \Aut(A)\ |\ \alpha(a_i)-a_i\in \cU \ni \alpha^{-1}(a_i)-a_i,\ \forall i\}
  \end{equation*}
  where $\cU$ is a neighborhood of $0\in A$ with respect to some
  topological vector space structure on $A$ making $h:A\to \bC$
  continuous.
\end{remark}

We now record a consequence of \Cref{pr.same-top}. As noted above,
given a compact quantum group we can also define the group $\Aut_r(A)$
of coproduct-preserving automorphisms of the reduced $C^*$-algebra
$\overline{A}_r$. We understand $\Aut_r(A)$ to be topologized by the
identity neighborhoods \Cref{eq:def-top-bis} for the reduced
$C^*$-norm $\|\cdot\|$.

Since any $\alpha\in\Aut(A)$ preserves the Haar state, it defines a
unique element in $\Aut_r(A)$. Hence, we have a canonical map
$\Aut(A)\rightarrow \Aut_r(A)$ which is clearly a group
homomorphism. It also easy to show that this map is bijective. The
inverse bijection is constructed in the following way. Since any
$\alpha\in\Aut_r(A)$ restrict to an automorphism of the dense Hopf
$*$-algebra $A$, it extends uniquely by the universal property to an
automorphism in $\Aut(A)$.

Similarly, since any automorphism of $\overline{A}_r$ intertwining
$\Delta$ has a unique normal extension to ${\rm L}^\infty(G)$, it
induces a bijective map $\Aut_r(A)\rightarrow \Aut_\infty(A)$, where
the latter is the set of comultiplication-preserving automorphisms of
the reduced von Neumann algebra $L^\infty(G)$. We topologize
$\Aut_\infty(A)$ by neighborhoods analogous to \Cref{eq:def-top-bis}.

Now, \Cref{pr.same-top} proves

\begin{corollary}\label{cor.same-top}
  For any compact quantum group $G$ with underlying CQG algebra $A$
  the groups $\Aut(A)$, $\Aut_r(A)$ and $\Aut_\infty(A)$ are
  isomorphic as topological groups. \qedhere
\end{corollary}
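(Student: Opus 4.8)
The plan is to show that the two canonical group homomorphisms $\Aut(A)\to\Aut_r(A)\to\Aut_\infty(A)$ are homeomorphisms; the text above has already verified that each is a bijective group homomorphism, so the only content remaining is continuity of the maps and of their inverses. Since all three groups are topological groups, it suffices to check continuity of each map and its inverse at the identity, i.e.\ that each map sends open identity neighborhoods into open identity neighborhoods in both directions.

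The key observation is that \Cref{pr.same-top} is stated for an \emph{arbitrary} norm (indeed, by \Cref{re.tops}, an arbitrary topological vector space structure) on the underlying Hopf $*$-algebra $A$ for which the Haar state $h$ is continuous, and in every case it identifies the resulting topology on $\Aut(A)$ with the single norm-independent topology $\cat{top}_{emb}$ coming from the embedding \Cref{eq:wr}. So first I would observe that the Haar state is continuous for each of the three relevant norms: the full $C^*$-norm of $\overline{A}$, the reduced $C^*$-norm of $\overline{A}_r$, and the von Neumann algebra topology on $L^\infty(G)$ (for the last, the GNS construction realizes $h$ as a vector state, hence normal and in particular continuous). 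Each of these norms restricts to a norm, or more generally a compatible topological vector space structure, on the dense subalgebra $A$, and $h$ remains continuous for each restriction. Applying \Cref{pr.same-top} (or \Cref{re.tops}) to each of the three structures in turn shows that the topology on each of $\Aut(A)$, $\Aut_r(A)$, $\Aut_\infty(A)$ — each defined via identity neighborhoods of the form \Cref{eq:def-top-bis} for its respective norm — coincides with $\cat{top}_{emb}$.

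Once this is established the corollary is immediate: the abstract group isomorphisms constructed in the discussion (restriction to $A$ on the one hand, universal/normal extension on the other) intertwine the three topologies precisely because all three \emph{are} $\cat{top}_{emb}$, which depends only on the permutation action on $\mathrm{Irr}(G)$ and on the Lie topologies of the factors $PGL(V)$ — data intrinsic to the Hopf $*$-algebra $A$ and independent of any completion. Thus each of the canonical bijections is, tautologically, a homeomorphism, and the three groups are isomorphic as topological groups.

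I do not expect a serious obstacle here, as the heavy lifting has already been done in \Cref{pr.same-top}; the proof is essentially an invocation of that proposition three times. The one point requiring mild care is verifying that $h$ is continuous for the von Neumann algebra case and that the $L^\infty(G)$ topology does induce on $A$ a topological vector space structure of the type covered by \Cref{re.tops}, so that the proposition genuinely applies rather than only its $C^*$-normed special case. Aside from this bookkeeping, the argument is formal.
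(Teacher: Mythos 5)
Your proposal is correct and matches the paper's own argument: the paper likewise treats the corollary as an immediate consequence of \Cref{pr.same-top}, having already constructed the canonical bijections and topologized all three groups by neighborhoods of the form \Cref{eq:def-top-bis}, so that each topology is identified with the norm-independent embedding topology from \Cref{eq:wr}. Your added bookkeeping (continuity of the Haar state in each of the three settings, e.g.\ via the GNS realization in the von Neumann case) simply makes explicit what the paper leaves implicit.
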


As mentioned above, the problem of seeking CMQGs whose fusion algebra is not finitely generated was prompted by the question of whether or not the outer automorphism group $\out(A)$ of a CMQG algebra $A$ is discrete. In fact, for a CMQG $G$, if the fusion ring $Z(G)$ is finitely generated, then the group $\out(G)$ is discrete. This is not hard to see and was exhibited in \cite{MP16}. We recap the proof here for the convenience of the reader.

\begin{lemma}\label{Open}
Let $[u]\in \text{\emph{Irr}}(G)$ and let $\chi(u)$ denote the character of $u$. The stabilizer subgroup of $u$ defined by 
$$K_{u}:= \{\alpha\in {\Aut}(\G) : \alpha(\chi(u))=\chi(u)\}$$
is an open subgroup of $\Aut(G)$.
\end{lemma}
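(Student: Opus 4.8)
The plan is to produce an open identity neighborhood contained in $K_u$ and then invoke the standard fact that a subgroup of a topological group which contains an open neighborhood of the identity is itself open: if $1\in V\subseteq K_u$ with $V$ open, then for any $\beta\in K_u$ the left translate $\beta V$ is an open subset of $K_u$ containing $\beta$, so $K_u=\bigcup_{\beta\in K_u}\beta V$ is open. The cleanest way to exhibit such a $V$ is to recognize $K_u$ conceptually. I would observe that $K_u$ is exactly the preimage of the point-stabilizer of $[u]\in\textrm{Irr}(G)$ under the homomorphism $\Aut(A)\to S_{\textrm{Irr}(G)}$ recorded before \Cref{eq:wr}: an automorphism $\alpha$ fixes $\chi(u)$ precisely when it sends $u$ to an equivalent representation, i.e. when its image in $S_{\textrm{Irr}(G)}$ fixes the point $[u]$. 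Point-stabilizers are sub-basic open sets for the topology of pointwise convergence on the discrete set $\textrm{Irr}(G)$, and the displayed homomorphism is continuous, being the composite of the embedding \Cref{eq:wr} with the projection onto the $S_{\textrm{Irr}(G)}$ factor of the semidirect product. Hence $K_u$ is open, and this already closes the argument.

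Alternatively, and more in the spirit of \Cref{pr.same-top}, I would produce the open neighborhood explicitly. The key input is the orthogonality relation $h(\chi(u)\chi(w)^*)\in\{0,1\}$, equal to $1$ exactly when $w\cong u$, together with the hypothesis that the Haar state $h$ is continuous. For any $\alpha\in\Aut(A)$ the element $\alpha(\chi(u))$ is again the character $\chi(w)$ of some irreducible $w$; if $\|\alpha(\chi(u))-\chi(u)\|$ is small, then
\begin{equation*}
  \bigl|h\bigl(\chi(w)\chi(u)^*\bigr)-h\bigl(\chi(u)\chi(u)^*\bigr)\bigr|=\bigl|h\bigl((\alpha(\chi(u))-\chi(u))\chi(u)^*\bigr)\bigr|
\end{equation*}
is small by continuity of $h$ and submultiplicativity of the norm. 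The left-hand quantity, being the difference of the integer $h(\chi(w)\chi(u)^*)\in\{0,1\}$ and $h(\chi(u)\chi(u)^*)=1$, is then forced to vanish, so $w\cong u$ and $\alpha(\chi(u))=\chi(u)$. Choosing $\varepsilon>0$ small enough, the neighborhood $U(\chi(u);\varepsilon)$ of \Cref{eq:def-top-bis} is therefore contained in $K_u$.

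I do not expect a genuine obstacle here: both routes are short once the topology of \Cref{eq:wr} and the character orthogonality relations are in hand, and the second route essentially reproduces the relevant portion of the proof of \Cref{pr.same-top}. The only point requiring a little care is the quantitative estimate linking $\|\alpha(\chi(u))-\chi(u)\|$ to the size of $|h(\cdot)|$, where one uses that $h$ is a bounded functional and that the norm is submultiplicative, so that the product $(\alpha(\chi(u))-\chi(u))\chi(u)^*$ has controllably small norm.
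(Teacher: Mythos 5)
Your proposal is correct, and it really contains two arguments. Your second route is essentially the paper's own proof: the paper considers $V(\chi(u),1)=\{\alpha\in\Aut(G):\norm{\chi(u)-\alpha(\chi(u))}<1\}$, notes that $h(\alpha(\chi(u))^*\chi(u))\in\{0,1\}$ with value $1$ exactly when $\alpha\in K_u$, and deduces from $\abs{h(\chi(u)^*\chi(u))-h(\alpha(\chi(u))^*\chi(u))}\le\norm{\chi(u)-\alpha(\chi(u))}<1$ that every $\alpha\in V(\chi(u),1)$ lies in $K_u$. The only difference is quantitative: the paper's bound is constant-free because it is (implicitly) the Cauchy--Schwarz inequality for the state $h$ together with $h(\chi(u)^*\chi(u))=1$, so $\varepsilon=1$ works outright, whereas your bound via boundedness of $h$ and submultiplicativity of the norm picks up the factor $\norm{\chi(u)}$ and forces $\varepsilon<1/\norm{\chi(u)}$; both are sound. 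Your first route is genuinely different from what the paper does: you identify $K_u$ as the preimage of the point stabilizer of $[u]$ under the continuous homomorphism $\Aut(A)\to S_{\textrm{Irr}(G)}$ obtained by composing the embedding \Cref{eq:wr} with the projection of the semidirect product onto its symmetric-group factor, and you use that point stabilizers are open in the topology of pointwise convergence on a discrete set. (The identification itself is legitimate because characters determine irreducible comodules up to isomorphism, so $\alpha(\chi(u))=\chi(u)$ exactly when the induced permutation fixes $[u]$.) What this buys is conceptual economy: openness of $K_u$ becomes a purely formal statement about the permutation action on $\textrm{Irr}(G)$, and the same argument gives openness of the stabilizer of any finite subset of $\textrm{Irr}(G)$ at no extra cost. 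What it costs is reliance on \Cref{pr.same-top}: the lemma concerns the topology defined by \Cref{eq:def-top}, so you need the identification of that topology with the embedding topology, and the proof of \Cref{pr.same-top} itself runs through exactly the Haar-state orthogonality estimate of your second route; so the first route repackages the same analytic input rather than avoiding it, while the paper's (and your second) argument is self-contained and produces an explicit identity neighborhood inside $K_u$.
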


\begin{proof}
Consider the open set defined by  $V(\chi(u), 1)= \{\alpha\in\Aut(G): \norm{\chi(u) - \alpha(\chi(u))}<1\}$. Note that $h(\chi(u)^\ast \chi(u)) = 1$, and for $\alpha\in Aut(G)$ one has $h(\alpha(\chi(u))^\ast\chi(u))=1$ if $\alpha\in K_u$ and zero otherwise. So, for $\alpha \in V(\chi(u), 1)$, we have
$$\abs{h(\chi(u)^\ast \chi(u)) - h(\alpha(\chi(u))^\ast \chi(u))} \leqslant \norm{\chi(u) - \alpha(\chi(u))}<1$$
and hence $\abs{1 - h(\alpha(\chi(u))^\ast \chi(u))} < 1$. It follows that $\alpha \in K_u$.
Consequently, $K_u$ is indeed an open subgroup of $\Aut(G)$.
\end{proof}

Using the above lemma, it was shown in \cite{iss} that the group
$\Out(G)$ for any compact quantum group $G$ is totally
disconnected.

\begin{proposition}\label{fin}
  Suppose the fusion ring $Z(G)$ of $G$ is finitely generated as a
  ring. Then $\Out(G)$ is discrete, equivalently $\Aut_\chi(G)$
  is an open subgroup of $\Aut(G)$.
\end{proposition}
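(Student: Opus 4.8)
The plan is to exhibit $\Aut_\chi(G)$ as a \emph{finite} intersection of the open stabilizer subgroups $K_u$ supplied by \Cref{Open}. First I would dispose of the stated equivalence: for a topological group, a normal subgroup is open precisely when the quotient, with its quotient topology, is discrete, since the quotient map is open and continuous. Thus $\Out(G)=\Aut(G)/\Aut_\chi(G)$ is discrete if and only if $\Aut_\chi(G)$ is open, and it suffices to prove the latter.

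Next I would record the description of $\Aut_\chi(G)$ in terms of characters. Any $\alpha\in\Aut(G)$ permutes the simple subcoalgebras of $A$, hence permutes $\mathrm{Irr}(G)$; writing $\alpha\cdot[u]$ for the image class, the restriction of $\alpha$ to the coefficient coalgebra of $u$ is a coalgebra isomorphism onto that of $\alpha\cdot[u]$, and in particular $\alpha(\chi(u))=\chi(\alpha\cdot[u])$. Since the character of an irreducible determines its equivalence class, $\alpha$ induces the trivial permutation of $\mathrm{Irr}(G)$ — i.e. lies in $\Aut_\chi(G)$ — exactly when $\alpha(\chi(u))=\chi(u)$ for every $[u]\in\mathrm{Irr}(G)$. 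In the notation of \Cref{Open} this reads
\begin{equation*}
  \Aut_\chi(G)=\bigcap_{[u]\in\mathrm{Irr}(G)}K_u.
\end{equation*}

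The heart of the matter is that this intersection is in fact finite, and here the hypothesis enters. The key observation is that $\alpha$, being an algebra automorphism of $A$ that permutes the characters, induces a ring automorphism of the fusion ring $Z(G)$: inside $A$ one has $\chi(u)\chi(v)=\chi(u\otimes v)=\sum_w N_{uv}^w\,\chi(w)$, so $\alpha$ respects the fusion product and carries the $\mathbb Z$-basis $\{\chi(u)\}$ to itself. Now suppose $Z(G)$ is finitely generated as a ring. Expanding each generator in the basis $\{\chi(u)\}$ and collecting the finitely many irreducibles that occur, one sees that $Z(G)$ is generated as a ring by finitely many characters $\chi(u_1),\dots,\chi(u_k)$. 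If $\alpha$ fixes each $\chi(u_i)$, then, being a ring endomorphism, it fixes the subring they generate, which is all of $Z(G)$; hence $\alpha$ fixes every character and lies in $\Aut_\chi(G)$. Combined with the previous display this gives
\begin{equation*}
  \Aut_\chi(G)=\bigcap_{i=1}^{k}K_{u_i},
\end{equation*}
a finite intersection of open subgroups (each $K_{u_i}$ open by \Cref{Open}), and therefore open.

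The only genuinely substantive step is the passage from ``finitely generated as a ring'' to a finite intersection, which rests on $\alpha$ acting as a \emph{ring} automorphism of $Z(G)$ so that fixing ring generators forces fixing the entire fusion ring; once this is in place, the remaining topology — a finite intersection of open subgroups is open — is immediate. A minor point to check carefully is that a finite ring-generating set may always be taken to consist of characters $\chi(u_i)$ of irreducibles rather than arbitrary elements of $Z(G)$, which follows simply by collecting the finitely many irreducibles appearing in any prescribed finite generating set.
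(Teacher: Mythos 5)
Your proposal is correct and follows essentially the same route as the paper: both reduce to the finite set of characters occurring in the basis expansion of a finite generating set of $Z(G)$, show that fixing those characters forces fixing all of $Z(G)$ (hence every character), and conclude that $\Aut_\chi(G)$ is a finite intersection of the open stabilizers $K_u$ from \Cref{Open}. Your two small additions — spelling out the ``open subgroup $\Leftrightarrow$ discrete quotient'' equivalence and noting that the generating set may be taken to consist of characters themselves — are correct and only streamline what the paper leaves implicit.
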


\begin{proof}
Denote the generators of $Z(G)$ by $\lambda_1,\lambda_2,\cdots,\lambda_n$. The characters of irreducible representations of $G$ form a basis of $Z(G)$. Define the set 
\begin{align*}
N:=\{\chi(u) : \chi(u) \text{ appears in the linear decomposition of some }\lambda_i, 1\leq i\leq n\},
\end{align*}
where $[u]\in \text{Irr}(G)$. Clearly, $N\subseteq G_{char}$ is finite, where $G_{char}$ deotes the set of characters of $G$. Note that $\alpha\in \Aut(G)$ is in $\Aut_\chi(G)$ if and only if $\alpha(\chi(u))=\chi(u)$ for all $\chi(u)\in N$. 
Indeed, observe that $\alpha(\lambda_i)=\lambda_i$ for all $1\leq i\leq n$ if $\alpha(\chi(u))=\chi(u)$ for all $\chi(u)\in N$. But since $\lambda_i$ generate $Z(G)$ $($as a ring$)$, we have $\alpha(a)=a$ for all $a\in Z(G)$ if $\alpha(\chi(u))=\chi(u)$ for all $\chi(u)\in N$. Since $G_{char}\subseteq Z(G)$ it follows that $\alpha\in Aut_\chi(G)$. The other implication is obvious.

To complete the proof, fix $[u]\in \text{Irr}(G)$ and consider the
stabilizer group $K_u$ defined in \Cref{Open}. Write
$\mathcal{O}=\{[u]\in \text{Irr}(G): \chi(u)\in N\}$. Observe that
$\Aut_\chi(G)=\cap_{[u]\in \mathcal{O}}K_{u}$. Since $N$ is finite use
\Cref{Open} to finish the proof.
\end{proof}

Thus, finite generation of the fusion ring for a compact matrix quantum group would have provided an affirmative answer to the question whether its outer automorphism group is discrete, which is true in the classical case of compact Lie groups, hence the relevance of that problem. However, we show that there exist examples of compact matrix quantum for which fusion ring is not finitely generated as a ring. However, inspite of such examples (see \Cref{se.counter} below), we can show that the outer automorphism group of a $CMQG$ is indeed discrete. This illustrates an interesting phenomena in the theory of quantum groups, where the examples are diverse enough for such examples to exist, but at the same time, the theory is nice enough to allow us to prove unified results for all quantum groups.

\begin{theorem}\label{th.out}
  For any CMQG $A$ the group $\out(A)$ is discrete. 
\end{theorem}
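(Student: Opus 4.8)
The plan is to prove the equivalent assertion that $\Aut_\chi(A)$ is open in $\Aut(A)$, since the quotient $\out(A)=\Aut(A)/\Aut_\chi(A)$ is discrete precisely when $\Aut_\chi(A)$ is an open subgroup (as in \Cref{fin}). The strategy is to exploit the defining feature of a CMQG, namely that $A$ is generated by the matrix coefficients of a single finite-dimensional representation $u$, in order to realize a suitable open subgroup of $\Aut(A)$ as a Lie group; for such a group the identity component is automatically open, and that is what ultimately forces discreteness.

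First I would fix the fundamental representation $u$ and let $w_1,\dots,w_m\in\mathrm{Irr}(G)$ be its irreducible constituents, with coefficient coalgebras $C_{w_k}\subseteq A$. Setting $K=\bigcap_{k=1}^m K_{w_k}$, \Cref{Open} makes $K$ an open subgroup of $\Aut(A)$, being a finite intersection of open subgroups. Since the generators $u_{ij}$ of $A$ lie in $\bigoplus_k C_{w_k}$ and every $\alpha\in K$ preserves each $C_{w_k}$, such an $\alpha$ is completely determined by the tuple of its restrictions $\alpha|_{C_{w_k}}\in\Aut(C_{w_k})\cong PGL(w_k)$. This yields an injective group homomorphism $r\colon K\hookrightarrow L:=\prod_{k=1}^m PGL(w_k)$ into a finite-dimensional Lie group. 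Computing the topology on $\Aut(A)$ via a convenient norm (e.g. the $C^*$-norm, justified by \Cref{pr.same-top}) and using that $A$ is generated by $\bigoplus_k C_{w_k}$ with each $C_{w_k}$ finite-dimensional, one checks that $r$ is a homeomorphism onto its image: it is continuous by the definition of the topology, while for the inverse, the value of $\alpha$ on a product of generators is a fixed polynomial expression in the restrictions, so convergence of the restrictions forces convergence of $\alpha$ and of $\alpha^{-1}$ on every element of $A$.

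The main obstacle, and the step requiring the most care, is to show that the image $r(K)$ is \emph{closed} in $L$; granting this, Cartan's closed-subgroup theorem identifies $K$ with an embedded Lie subgroup of $L$. To prove closedness I would take $\alpha_i\in K$ with $r(\alpha_i)\to g\in L$ and manufacture a limiting automorphism: define $\alpha$ on the generators $u_{ij}$ by letting $g$ act on $\bigoplus_k C_{w_k}$, and extend multiplicatively. Because each $\alpha_i$ is a Hopf $*$-automorphism and the relations among the $u_{ij}$ are closed conditions, preserved under the (finite-dimensional, hence continuous) passage to the limit, $\alpha$ is a well-defined Hopf $*$-endomorphism; constructing the analogous map from $g^{-1}=\lim r(\alpha_i)^{-1}$ and passing to the limit in $\alpha_i\alpha_i^{-1}=\id=\alpha_i^{-1}\alpha_i$ shows $\alpha$ is invertible. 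Since $g$ preserves each $C_{w_k}$ we obtain $\alpha\in K$ with $r(\alpha)=g$, so $r(K)$ is closed.

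It then remains to assemble the pieces. As $K$ is now (identified with) a Lie group, its identity component $K^0$ is open in $K$, hence open in $\Aut(A)$. For each $w\in\mathrm{Irr}(G)$ the map $\alpha\mapsto\alpha(\chi(w))=\chi(\alpha(w))$ is continuous from $\Aut(A)$ into $A$ equipped with the $L^2$-norm of the Haar state $h$ (continuity being exactly the definition of the topology, cf. \Cref{pr.same-top}); since the characters $\{\chi(v):v\in\mathrm{Irr}(G)\}$ are orthonormal and therefore form a uniformly discrete subset of this $L^2$-space, the map is locally constant. Restricted to the connected set $K^0$ it is hence constant, equal to its value $\chi(w)$ at the identity, so every element of $K^0$ fixes every character and $K^0\subseteq\Aut_\chi(A)$. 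Thus $\Aut_\chi(A)$ contains the open set $K^0$ and is open, whence $\out(A)$ is discrete.
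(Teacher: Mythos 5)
Your proof is correct and follows essentially the same route as the paper's: you realize an open stabilizer subgroup as a Lie group by embedding it via restriction to the generating coalgebra into a (product of) projective linear group(s), with closedness of the image playing exactly the role of \Cref{le.out-aux}, and then use connectedness of the identity component together with discreteness of the set of characters to conclude that this component lies in $\Aut_\chi(A)$. The only cosmetic differences are that you intersect the stabilizers of the irreducible constituents of $u$ instead of working with the isotropy group of $U$ itself, and you phrase the final step as local constancy of $\alpha\mapsto\alpha(\chi(w))$ in $L^2$ rather than path-connectedness plus Haar-orthogonality of distinct simple subcoalgebras.
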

\begin{proof}
  Let 
  \begin{equation*}
    C=\mathrm{span}\{u_{ij},\ 1\le i,j\le n\}
  \end{equation*}
be a subcoalgebra generating $A$ as an algebra, and corresponding to some $A$-comodule $U$ with basis $e_i$, with coaction
\begin{equation*}
  e_j\mapsto \sum_i e_i\otimes u_{ij}. 
\end{equation*}
We will show that the group $\aut_\chi(A)$ of {\it inner} automorphisms of $A$ is open. Equivalently, we have to show that if an automorphism $\alpha$ of $A$ is sufficiently close to the trivial automorphism $1\in \aut=\aut(A)$ in the standard topology on $\aut$ then it is inner. 

Sufficiently close to $1$ certainly implies that $\alpha$ preserves the class of $U$, and hence acts on the coalgebra $C$. This means in particular that some open neighborhood of $1\in\aut$ is contained in the isotropy group $\aut_U$ of (the class of) $U$. 

Every element in $\aut_U$ acts on the coalgebra $C$, and moreover,
since the latter generates $A$ as an algebra, we have an {\it
  embedding} $\aut_U\to \aut(C)$. The latter is a closed Lie subgroup
of $PGL_n(\bC)$ (the latter being the automorphism group of an
$n\times n$ matrix coalgebra), and moreover the inclusion
\begin{equation*}
  \aut_U\subseteq \aut(C)\subseteq PGL_n(\bC)
\end{equation*}
is easily seen to be closed (see \Cref{le.out-aux} below). In
conclusion $\aut_U$ is a Lie group. But this then implies that some
neighborhood $\cV$ of the identity in $\aut_U$ (and hence also in
$\aut$) is a (path-)connected Lie group.

Now consider the action of elements $\beta$ in the neighborhood $\cV$
on some fixed simple subcoalgebra $D\subset A$. The image $\beta(D)$
is a simple subcoalgebra of $A$ that either coincides with $D$, or is
orthogonal to $D$ with respect to the Haar state. The fact that
$1\in\aut$ fixes $D$ and can be connected to $\beta$ by a path rules
out the latter possibility, and hence $\beta(D)=D$.

It follows from the previous paragraph that $\beta$ fixes the class of
the irreducible comodule corresponding to $D$; since this comodule was
chosen arbitrarily from among the irreducible representations of the
quantum group, we obtain the desired conclusion that $\beta$ acts
trivially on the representation ring of $A$.
\end{proof}

\begin{lemma}\label{le.out-aux}
  Let $A$ be a normed complex algebra and $C\le A$ a finite-dimensional
  linear subspace. Then, the embedding
  \begin{equation*}
    \aut(A,C)\subseteq PGL(C)
  \end{equation*}
is closed. 
\end{lemma}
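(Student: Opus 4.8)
The plan is to read this as the assertion that a subgroup of a Lie group is closed, and to establish it by a direct limiting argument. Restriction to $C$ sends an element of $\aut(A,C)$ to an honest linear automorphism of the finite-dimensional space $C$; in the case of interest $C$ generates $A$ as an algebra, so this restriction map is injective (an automorphism preserving $C$ is determined by its values on a generating set) and realizes $\aut(A,C)$ as a subgroup of $GL(C)$, contained in the relevant $PGL(C)$. Since both topologies agree on the finite-dimensional $C$, only closedness of the image is at issue. Accordingly, I would fix a sequence $\alpha_k\in\aut(A,C)$ whose restrictions $g_k:=\alpha_k|_C$ converge, as linear maps on $C$, to some $g\in GL(C)$, and show that $g$ is again the restriction of an element of $\aut(A,C)$.

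To produce the candidate, I would extend $g$ multiplicatively. Writing $A=T(C)/I$ as a quotient of the tensor algebra $T(C)$ on $C$, the linear map $g\colon C\to C$ induces $T(g)\colon T(C)\to T(C)$, and what must be verified is that the composite $T(C)\xrightarrow{T(g)}T(C)\twoheadrightarrow A$ annihilates $I$, so that it descends to an algebra endomorphism $\alpha\colon A\to A$ with $\alpha|_C=g$ and hence $\alpha(C)=g(C)=C$.

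This \emph{well-definedness} is the heart of the matter and the step I expect to be the main obstacle. An element of $I$ records a relation $\sum_j\lambda_j\,c_{j,1}\cdots c_{j,m_j}=0$ holding in $A$, with all $c_{j,i}\in C$, and I must check that $\sum_j\lambda_j\,g(c_{j,1})\cdots g(c_{j,m_j})=0$. Here the limit does the work: since each $\alpha_k$ is an algebra homomorphism with $\alpha_k|_C=g_k$, one has $\sum_j\lambda_j\,g_k(c_{j,1})\cdots g_k(c_{j,m_j})=\alpha_k\!\big(\sum_j\lambda_j\,c_{j,1}\cdots c_{j,m_j}\big)=0$ for every $k$. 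Because $g_k\to g$ pointwise on the finite-dimensional $C$ and multiplication in the normed algebra $A$ is continuous, each finite-length product $g_k(c_{j,1})\cdots g_k(c_{j,m_j})$ converges in norm to $g(c_{j,1})\cdots g(c_{j,m_j})$, so the finite sum passes to the limit and equals $0$. Thus $T(g)$ kills $I$ and $\alpha$ is a genuine algebra endomorphism. Note that only finite products are ever formed, so no completeness of $A$ is used beyond continuity of multiplication.

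Finally I would upgrade $\alpha$ to an element of $\aut(A,C)$. Running the identical construction on $g^{-1}=\lim_k g_k^{-1}=\lim_k(\alpha_k^{-1}|_C)$, with $\alpha_k^{-1}\in\aut(A,C)$, yields an algebra endomorphism $\beta$ with $\beta|_C=g^{-1}$. Then $\alpha\beta$ and $\beta\alpha$ restrict to the identity on $C$, hence, $C$ generating $A$, are the identity on all of $A$; so $\alpha$ is invertible and $\alpha\in\aut(A,C)$ with $\alpha|_C=g$. This exhibits $g$ in the image, showing the image of $\aut(A,C)$ is closed under such limits, i.e. the embedding $\aut(A,C)\subseteq PGL(C)$ is closed, as claimed.
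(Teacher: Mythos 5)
Your proof is correct and follows essentially the same route as the paper's: the paper identifies the image of $\aut(A,C)$ in $\aut(C)$ as precisely the linear maps preserving all algebra relations among elements of $C$ and notes that preserving each relation is a closed condition (by continuity of multiplication), which is exactly what your limiting argument verifies. Your write-up merely makes explicit the two points the paper's ``precisely'' glosses over --- the tensor-algebra extension of a relation-preserving limit $g$ to an endomorphism of $A$, and its invertibility via the same construction applied to $g^{-1}$.
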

\begin{proof}
  The elements of $\aut(C)$ that arise by restriction from $\aut(A,C)$
  are precisely those that preserve all relations between elements of
  $C$. But the fact that $A$ is a topological algebra implies that the
  preservation of each relation is a closed condition, and the
  conclusion follows.
\end{proof}

\begin{corollary}\label{cor.lie}
  For any compact matrix quantum group $G$, the automorphism group
  $\Aut_\chi(G)$ is a compact Lie group.
\end{corollary}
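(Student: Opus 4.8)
The plan is to realize $\Aut_\chi(G)$ as a closed subgroup of a compact unitary group, using the generating coalgebra supplied by the compact-matrix hypothesis. Fix, exactly as in the proof of \Cref{th.out}, a finite-dimensional subcoalgebra $C=\mathrm{span}\{u_{ij}\}\subset A$ that generates $A$ as an algebra and is the coefficient coalgebra of a comodule $U$. Because $C$ generates $A$, every automorphism is determined by its restriction to $C$, so restriction gives an injective continuous homomorphism $\aut(A,C)\to GL(C)$. I would first note that this is in fact a topological embedding: the topology on the source, by the analysis underlying \Cref{pr.same-top}, is controlled by the action on generators, so convergence of $\alpha|_C$ forces convergence of $\alpha$. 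By \Cref{le.out-aux} the image is closed in $GL(C)$ (equivalently in $PGL(C)$).

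Next I would place $\Aut_\chi(G)$ inside this picture. Any $\alpha\in\Aut_\chi(G)$ fixes the class of every irreducible, in particular that of $U$, and therefore preserves $C$; hence $\Aut_\chi(G)\subseteq\aut(A,C)$. Moreover $\Aut_\chi(G)$ is open in $\Aut(A)$ by \Cref{th.out}, so it is also closed (open subgroups are closed), and in particular it is a closed subgroup of $\aut(A,C)$.

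The crux is compactness, and this is where I would invoke that we are dealing with genuine $*$-automorphisms preserving the Haar state $h$. For such an $\alpha$ one has $h(\alpha(a)^*\alpha(b))=h(\alpha(a^*b))=h(a^*b)$, so $\alpha$ is an isometry for the inner product $\langle a,b\rangle=h(a^*b)$. Restricted to the finite-dimensional, $\alpha$-invariant subspace $C$, the map $\alpha|_C$ is thus a unitary operator on the finite-dimensional Hilbert space $(C,\langle\cdot,\cdot\rangle)$. Consequently the embedding of the first paragraph factors through the unitary group $U(C)$, which is compact. Since a closed subset of a compact group is compact and a closed subgroup of a Lie group is a Lie group, $\aut(A,C)$ — and hence its closed subgroup $\Aut_\chi(G)$ — is a compact Lie group.

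The main obstacle, and the only genuinely new ingredient beyond \Cref{th.out}, is this passage to compactness: it is essential to use that the elements of $\Aut_\chi(G)$ are $*$-automorphisms fixing $h$, for only then does the restriction to $C$ land in the compact $U(C)$ rather than in the noncompact $PGL(C)$ that already sufficed for the discreteness statement of \Cref{th.out}. The remaining verification, that restriction to $C$ is a topological embedding so that compactness upstairs descends to $\Aut_\chi(G)$, is routine given that $C$ generates $A$ and that, by \Cref{pr.same-top}, the topology does not depend on the chosen norm.
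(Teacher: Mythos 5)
Your argument is correct, but it reaches compactness by a genuinely different route than the paper does. The paper's own proof is two lines: compactness of $\Aut_\chi(G)$ is simply quoted from Theorem 3.4 of \cite{iss} (a result valid for arbitrary compact quantum groups, not just matrix ones), and the Lie property follows because $\Aut_\chi(G)$ is a closed subgroup of the isotropy group $\Aut_U$, which the proof of \Cref{th.out} exhibits as a Lie group via the closed embedding into $PGL(C)$ supplied by \Cref{le.out-aux}. You share that second step --- restriction to the generating coalgebra $C$, closedness via \Cref{le.out-aux}, Cartan's closed-subgroup theorem --- but instead of importing compactness you derive it: every quantum group automorphism is a $*$-map fixing the Haar state $h$, so its restriction to the invariant finite-dimensional subspace $C$ is unitary for the inner product $h(a^*b)$ (positive definite because $h$ is faithful on the CQG algebra $A$), whence the closed image of $\aut(A,C)$ sits inside the compact group $U(C)$ rather than merely inside $PGL(C)$. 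This buys self-containedness and in fact proves slightly more than the statement: the whole stabilizer $\aut(A,C)$, hence also $\Aut_U$, is a compact Lie group, not only $\Aut_\chi(G)$; what the paper's citation buys is brevity, plus the fact that compactness of $\Aut_\chi(G)$ is available for every compact quantum group. Two details you flag as routine do deserve care, though neither is a real gap relative to the paper's own level of rigor: the claim that restriction to $C$ is a homeomorphism onto its image rests on writing elements of $A$ as polynomials in elements of $C$ and their adjoints, so that pointwise convergence on $C$ propagates to all of $A$ (this is the role the analysis behind \Cref{pr.same-top} plays); and for the image of $\aut(A,C)$ to be closed one should either choose $C$ to be $*$-stable (e.g. replace $u$ by $u\oplus\bar{u}$) or check that the coproduct- and $*$-compatibility conditions survive pointwise limits.
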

\begin{proof}
  First let us note that $\Aut_\chi(G)$ is compact (see Theorem 3.4 of
  \cite{iss}). Now, as shown in the proof of \Cref{th.out}, the group
  $\Aut_U$ is a Lie group. It is easy to see that $\Aut_\chi(G)$ is a
  closed subgroup of $\Aut_U$ and hence is itself a Lie group.
\end{proof}

\begin{corollary}
  For any compact matrix quantum group $G$, the automorphism group
  $\Aut(G)$ is a Lie group.
\end{corollary}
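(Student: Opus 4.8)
The plan is to realize $\Aut(G)$ as a topological group having an \emph{open} subgroup that we already know to be a Lie group, and then to upgrade this to a Lie group structure on all of $\Aut(G)$ by transporting charts along cosets. Write $N := \Aut_\chi(G)$ for the inner automorphism group. By \Cref{cor.lie} the group $N$ is a compact Lie group, while \Cref{th.out} tells us that $\out(G) = \Aut(G)/N$ is discrete; equivalently, as recorded in \Cref{fin}, $N$ is an open (and normal) subgroup of $\Aut(G)$. Consequently $\Aut(G)$ decomposes as a disjoint union of the open cosets $gN$, each carried homeomorphically onto $N$ by the translation $x \mapsto g^{-1}x$.

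First I would use these translations to pull back the smooth structure of $N$. Fixing one representative for each coset of $N$, the associated homeomorphisms $gN \to N$ serve as charts; since distinct cosets are disjoint and open, there are no nontrivial overlaps to reconcile, and $\Aut(G)$ becomes a smooth manifold whose connected components are copies of $N$. (Allowing several representatives per coset would only introduce transition maps given by left translations inside $N$, which are smooth, so the atlas is well defined in any case.)

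The only substantive point, and the one I expect to be the main obstacle, is the smoothness of multiplication and inversion in these charts — and this is exactly where normality of $N$ is used. Near a pair $(g_1,g_2)$ write $g_1 = a_1 n_1$ and $g_2 = a_2 n_2$, with $a_i$ the fixed coset representatives and $n_i \in N$ the local coordinates. Then
\begin{equation*}
  g_1 g_2 = a_1 a_2 \left(a_2^{-1} n_1 a_2\right) n_2,
\end{equation*}
and the crucial observation is that $x \mapsto a_2^{-1} x a_2$ is a continuous automorphism of $\Aut(G)$ carrying $N$ into $N$ by normality; by the automatic smoothness of continuous homomorphisms between Lie groups it restricts to a smooth self-map of $N$. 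After absorbing the fixed element $a_1 a_2$ into the representative of its coset, the local expression for $g_1 g_2$ is therefore a smooth $N$-valued function of $(n_1,n_2)$. Inversion is handled identically: if $g = an$ and $a^{-1} = bm$ for the representative $b$ of the coset of $a^{-1}$, then $g^{-1} = b\left(b^{-1}n^{-1}b\right)m$, again smooth in $n$. This would show that $\Aut(G)$ is a Lie group. (Alternatively, one could simply note that $\Aut(G)$ is a locally Euclidean topological group — it has a Euclidean identity neighborhood inside $N$ — and invoke the solution of Hilbert's fifth problem, though that is far heavier machinery than the situation requires.)
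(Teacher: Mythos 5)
Your proposal is correct and follows essentially the same route as the paper: both use \Cref{th.out} to see that $\Aut_\chi(G)$ is an open normal subgroup of $\Aut(G)$ and \Cref{cor.lie} to see that it is a compact Lie group, and then conclude that $\Aut(G)$, being an extension of a discrete group by a Lie group, is itself Lie. The only difference is that you spell out in coset-chart detail the standard fact that a topological group containing an open normal Lie subgroup with discrete quotient is a Lie group, a fact the paper simply cites implicitly in its final sentence.
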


\begin{proof}
  For a compact matrix quantum group $G$, the normal subgroup
  $\Aut_\chi(G)\trianglelefteq \Aut(G)$ (which is compact and Lie
  according to \Cref{cor.lie}) is open by \Cref{th.out}. In conclusion
  $\Aut(G)$ is an extension of a discrete group by a Lie group, and is
  hence Lie.
\end{proof}

We now proceed to prove an Iwasawa-type theorem for quantum groups. One
of the main results of the seminal paper \cite{Iwa} states that for a
compact group $G$, the connected component of the identity (henceforth
connected component) of the automorphism group of $G$ equals the
connected component of the inner automorphism group of $G$.

Let now $G$ be a compact quantum group. Let us denote the connected
component of $\Aut(G)$ by $\Aut^0(G)$ and the connected component of
the group $\Aut_\chi(G)$ by $\Aut_\chi^0(G)$.

\begin{theorem}\label{th.iwa}
 For any compact quantum group $G$, we have $\Aut^0(G)\cong \Aut^0_\chi(G)$.
\end{theorem}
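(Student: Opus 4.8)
The plan is to exhibit $\Aut_\chi(G)$ as the kernel of a continuous homomorphism from $\Aut(G)$ onto a \emph{totally disconnected} group, and then to transport connectedness through that homomorphism. The relevant map is the permutation homomorphism
\[
  \rho:\Aut(G)\longrightarrow S_{\mathrm{Irr}(G)}
\]
recorded above, whose kernel is exactly $\Aut_\chi(G)$. By the embedding \Cref{eq:wr}, $\rho$ is the restriction of the (continuous) projection of the semidirect product $\left(\prod_V PGL(V)\right)\rtimes S_{\mathrm{Irr}(G)}$ onto its factor $S_{\mathrm{Irr}(G)}$, so $\rho$ is continuous; and $S_{\mathrm{Irr}(G)}$, topologized by pointwise convergence on the discrete set $\mathrm{Irr}(G)$, is totally disconnected (its point-stabilizers of finite subsets are open and closed), as already noted above following \cite{iss}.

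One of the two inclusions is immediate. Since $\Aut_\chi(G)$ carries the subspace topology from $\Aut(G)$, its identity component $\Aut^0_\chi(G)$ is a connected subset of $\Aut(G)$ containing the identity, and therefore $\Aut^0_\chi(G)\subseteq\Aut^0(G)$.

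For the reverse inclusion I would argue as follows. The image $\rho(\Aut^0(G))$ is connected (continuous image of a connected set) and contains the identity of $S_{\mathrm{Irr}(G)}$; since the target is totally disconnected, this image is trivial. Hence $\Aut^0(G)\subseteq\ker\rho=\Aut_\chi(G)$. Being a connected subset of $\Aut_\chi(G)$ containing the identity, $\Aut^0(G)$ then lies in the identity component $\Aut^0_\chi(G)$, so $\Aut^0(G)\subseteq\Aut^0_\chi(G)$. Combining the two inclusions gives $\Aut^0(G)=\Aut^0_\chi(G)$ as subsets of $\Aut(G)$ with the same subspace topology, which is the asserted isomorphism.

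The argument is short precisely because the earlier structural work has done the heavy lifting, so the step I would treat most carefully is the pair of topological facts underlying it: the continuity of $\rho$ and the total disconnectedness of $S_{\mathrm{Irr}(G)}$, both of which rest on the explicit topology placed on the right-hand side of \Cref{eq:wr}. Beyond these, everything is formal manipulation with identity components and continuous homomorphisms. In particular, no analysis of the $PGL(V)$-factors is required, since the whole reduction takes place after projecting them away; this is exactly the quantum counterpart of the mechanism in Iwasawa's classical theorem, where the ``outer'' part is forced to be discrete and hence meets the identity component trivially.
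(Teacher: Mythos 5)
Your proof is correct, and at bottom it runs on the same engine as the paper's: push the connectedness of $\Aut^0(G)$ through a continuous map that separates the classes in $\mathrm{Irr}(G)$, conclude $\Aut^0(G)\subseteq\Aut_\chi(G)$, and then compare identity components. The difference is in packaging and in what the continuity claim rests on. The paper argues one irreducible at a time: for each class $[u^a]$ it introduces $f_a(\alpha)=h_G\bigl(\chi_a^*\,\alpha(\chi_a)\bigr)$, notes that orthogonality of irreducible characters forces $f_a$ to take values in $\{0,1\}$ and that $f_a$ is continuous because the topology on $\Aut(G)$ is pointwise-norm convergence and $h_G$ is continuous; connectedness then gives $f_a\equiv 1$ on $\Aut^0(G)$, i.e.\ $\alpha(\chi_a)=\chi_a$ for every $a$. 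You instead bundle all these pointwise checks into the single permutation homomorphism $\rho:\Aut(G)\to S_{\mathrm{Irr}(G)}$ and quote total disconnectedness of the target; since the topology on $S_{\mathrm{Irr}(G)}$ is exactly pointwise convergence on the discrete set $\mathrm{Irr}(G)$, your continuity of $\rho$ and the paper's continuity of the family $(f_a)$ encode the same information. What each route buys: yours is structurally cleaner (one homomorphism, one soft topological fact, visibly parallel to Iwasawa's classical mechanism), but it leans on \Cref{pr.same-top} and the embedding \Cref{eq:wr} to know that $\rho$ is continuous, whereas the paper's version is self-contained, needing only continuity of the Haar state and character orthogonality --- the same ingredients as \Cref{Open}. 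One small attribution slip on your part: what the paper records after \Cref{Open} (following \cite{iss}) is total disconnectedness of $\Out(G)$, not of $S_{\mathrm{Irr}(G)}$; the latter fact is standard and your clopen-stabilizer justification is the right one, so this costs you nothing.
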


\begin{proof}
  It is obvious that $\Aut^0_\chi(G)\subseteq \Aut^0(G)$. So, the
  theorem will follow if we can show that if $\alpha\in \Aut^0(G)$,
  then $\alpha\in \Aut^0_\chi(G)$. To this end, we fix an irreducible
  representation $u^a$ of $G$, with character denoted by $\chi_a$ and
  define the following continuous function
 $$f_a:\Aut(G)\rightarrow \{0,1\}$$
 $$\alpha\mapsto h_G(\chi_a^\ast\alpha(\chi_a))$$
 It is easy to see that $f_a$ is a continuous function. Now, when
 $\alpha$ is the trivial automorphism, then we have that
 $f_a(\alpha)=1$, hence, we have, by connectedness of $\Aut^0(G)$,
 that for any $\alpha\in \Aut^0(G)$, $f_a(\alpha)=1$, which implies
 that $\alpha(\chi_a)=\chi_a$. Since, the choice of the irreducible
 representation was arbitrary, we have $\alpha\in \Aut_\chi(G)$, and
 hence, the result follows.
\end{proof}

This then allows us to prove a straightforward corollary.

\begin{corollary}
Let $G$ be a compact quantum group. Then the connected component of the group $\Aut(G)$ is a compact group. If $G$ is a compact matrix quantum group, then the connected component of $\Aut(G)$ is a compact Lie group.
\end{corollary}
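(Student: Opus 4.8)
The plan is to combine the Iwasawa-type identification of \Cref{th.iwa} with the compactness (and, in the compact matrix case, the Lie-group structure) of the inner automorphism group, reducing everything to facts already at hand. Concretely, I would first observe that the connected component of $\Aut(G)$ is by definition $\Aut^0(G)$, and that \Cref{th.iwa} furnishes a topological-group isomorphism $\Aut^0(G)\cong\Aut^0_\chi(G)$. It therefore suffices to establish the asserted properties for the connected component $\Aut^0_\chi(G)$ of the inner automorphism group, after which they transport back to $\Aut^0(G)$ along this isomorphism.

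For the first assertion I would invoke the fact, recorded above from \cite[Theorem~3.4]{iss}, that $\Aut_\chi(G)$ is a compact group for any compact quantum group $G$. The identity component of any topological group is a closed subgroup, and a closed subgroup of a compact group is compact; hence $\Aut^0_\chi(G)$ is compact, and so is $\Aut^0(G)$.

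For the second assertion, suppose $G$ is a compact matrix quantum group. Then \Cref{cor.lie} strengthens the previous input: $\Aut_\chi(G)$ is not merely compact but a compact Lie group. The identity component of a Lie group is an open (hence also closed) subgroup and inherits a Lie-group structure, and being closed in a compact group it is again compact. Thus $\Aut^0_\chi(G)$, and with it $\Aut^0(G)$, is a compact Lie group.

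I do not anticipate a genuine obstacle: the whole argument is an assembly of \Cref{th.iwa} with the structural facts about $\Aut_\chi(G)$ established earlier. The only point deserving a moment's care is the elementary but worth-stating observation that identity components are closed subgroups, which is exactly what lets compactness (and, in the matrix case, the Lie structure) pass from $\Aut_\chi(G)$ to its component; beyond that the statement is immediate.
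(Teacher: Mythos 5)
Your proof is correct and follows essentially the same route as the paper: identify $\Aut^0(G)$ with $\Aut^0_\chi(G)$ via \Cref{th.iwa}, then import compactness of $\Aut_\chi(G)$ from \cite[Theorem~3.4]{iss} (and, in the matrix case, the Lie structure from \Cref{cor.lie}). The only difference is that you spell out the elementary fact that identity components are closed subgroups, which the paper leaves implicit.
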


\begin{proof}
  This follows easily from the previous theorem and from the
  compactness of the group $\Aut_\chi(G)$ (see Theorem 3.4 of
  \cite{iss}). In the compact matrix quantum group case, this follows
  from the previous theorem and \Cref{cor.lie}.
\end{proof}

In fact, one can prove stronger structural results about the group $\Aut(G)$ if a little more is known about the compact quantum group $G$.  

\begin{proposition}
Let $G$ be a compact matrix quantum group such that $\out(G)$ is finite. Then $\Aut(G)$ is compact Lie group.
\end{proposition}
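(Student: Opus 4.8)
The plan is to read the statement through the short exact sequence $1 \to \Aut_\chi(G) \to \Aut(G) \to \out(G) \to 1$ and to combine two facts already in hand. First, \Cref{cor.lie} gives that $\Aut_\chi(G)$ is a compact Lie group. Second, \Cref{th.out} says that $\out(G)$ is discrete, which, by the reformulation recorded in \Cref{fin}, is the same as saying that $\Aut_\chi(G)$ is an \emph{open} subgroup of $\Aut(G)$. The hypothesis that $\out(G)$ is finite then means precisely that $\Aut_\chi(G)$ is an open subgroup of finite index, and the whole argument will rest on squeezing the conclusion out of this single structural observation.

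Granting this, compactness is immediate: an open subgroup of a topological group is automatically closed, so $\Aut(G)$ is the disjoint union of the finitely many cosets of $\Aut_\chi(G)$, each homeomorphic to $\Aut_\chi(G)$ by left translation and hence compact; a finite union of compact sets is compact. For the smooth structure I would invoke the standard principle that a topological group containing an open Lie subgroup is itself a Lie group: one transports the analytic atlas on $\Aut_\chi(G)$ to every coset by left translation, the overlaps being analytic because multiplication is continuous and is analytic near the identity. This last point is exactly what underlies the earlier corollary, in which $\Aut(G)$ was already seen to be a Lie group for an arbitrary CMQG; the present statement only adds compactness once $\out(G)$ is assumed finite.

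I do not anticipate a genuine obstacle: the content is the elementary observation that an extension of a finite group by a compact Lie group, presented through an open finite-index Lie subgroup, is again a compact Lie group. The only step deserving a word of care is the transport-of-structure claim that an open Lie subgroup forces the ambient group to be Lie; but this is classical and is in any case already used implicitly in the corollary just mentioned, so no new difficulty arises. Thus I expect the proof to amount to little more than citing \Cref{cor.lie}, \Cref{th.out}, and \Cref{fin}, and assembling them with the finite-index remark.
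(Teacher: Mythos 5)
Your proposal is correct and follows essentially the same route as the paper: the paper's own proof also observes that $\Aut(G)$ is an extension of the finite group $\out(G)$ by the compact Lie group $\Aut_\chi(G)$ (via \Cref{cor.lie}) and concludes directly. You merely fill in the standard details (openness of $\Aut_\chi(G)$ from \Cref{th.out}, coset decomposition for compactness, transport of the Lie structure) that the paper compresses into ``it follows easily.''
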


\begin{proof}
Since $\out(G)=\Aut(G)/\Aut_\chi(G)$, and since $\Aut_\chi(G)$ is a compact Lie group in case that $G$ is a compact matrix quantum group, hence, when $\out(G)$ is finite, it follows easily that $\Aut(G)$ is a compact Lie group. 
\end{proof}

\begin{corollary}
Let $G$ be a compact quantum group with fusion rules identical to that of simple compact Lie group. Then $\Aut(G)$ is a compact Lie group.
\end{corollary}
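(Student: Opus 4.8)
The plan is to reduce to the immediately preceding proposition, which says that a compact matrix quantum group with finite outer automorphism group has compact Lie automorphism group; so it suffices to check that $G$ is a CMQG and that $\out(G)$ is finite. For the first point, recall that the fusion ring $Z(G)$ of a simple compact Lie group is finitely generated as a ring, being (essentially) the polynomial ring on the finitely many fundamental representations $u^{(1)},\dots,u^{(r)}$, where $r$ is the rank. Hence every irreducible representation of $G$ sits inside a tensor product of the $u^{(i)}$ and their conjugates, so the matrix coefficients of the single finite-dimensional representation $v=\bigoplus_{i=1}^{r}u^{(i)}$ generate $A$ as a $*$-algebra. Thus $G$ is a compact matrix quantum group, and \Cref{cor.lie} already makes $\Aut_\chi(G)$ a compact Lie group.

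Next I would bound $\out(G)$. By construction $\Aut_\chi(G)$ is precisely the kernel of the action of $\Aut(G)$ on $\mathrm{Irr}(G)$, so $\out(G)=\Aut(G)/\Aut_\chi(G)$ embeds into the permutation group of $\mathrm{Irr}(G)$; and since automorphisms of $G$ respect direct sums, tensor products and duals, this image lands in the group of \emph{based} automorphisms of $Z(G)$, that is, ring automorphisms permuting the distinguished basis of irreducible characters while preserving the fusion coefficients. It therefore suffices to see that this based automorphism group is finite. Here the key input is the highest-weight filtration: writing $\chi_\lambda$ for the character of highest weight $\lambda$, one has $\chi_\lambda\chi_\mu=\chi_{\lambda+\mu}+(\text{lower-order terms})$, so a family of irreducibles generates $Z(G)$ exactly when their highest weights generate the monoid of dominant weights. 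As that monoid is freely generated by the fundamental weights $\omega_1,\dots,\omega_r$, any based ring automorphism must send the generating family $\{u^{(i)}\}$ to another such family and hence permute the fundamental representations. Being determined by its effect on these generators, the based automorphism group injects into $S_r$ and is finite; equivalently, it is governed by the finite group of symmetries of the underlying Dynkin diagram. So $\out(G)$ is finite and the preceding proposition applies.

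The main obstacle, and the only non-formal step, is this finiteness claim: one has to characterise the fundamental representations intrinsically inside the abstract fusion ring and show that any basis-permuting ring automorphism preserves their collection, which the leading-term argument achieves through the free-commutative-monoid structure of dominant weights special to the simple Lie type. A secondary point to treat with care is the passage from ``finitely generated fusion ring'' to ``CMQG'', since $G$ is a priori only a compact quantum group; this is exactly what the explicit generating representation $v=\bigoplus_i u^{(i)}$ supplies.
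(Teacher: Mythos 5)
Your overall strategy coincides with the paper's: reduce to the immediately preceding proposition by checking (a) that $G$ is a compact matrix quantum group and (b) that $\out(G)$ is finite. Your treatment of (a) is correct and is a point the paper leaves implicit, so making it explicit is a genuine (if small) improvement. The divergence is in (b): the paper simply invokes Proposition 3.8 of \cite{iss}, which says that a compact quantum group whose fusion rules are those of a simple compact Lie group has $\out(G)$ of order $1$, $2$, $3$ or $6$ (the possible orders of Dynkin-diagram symmetry groups), whereas you attempt a self-contained proof of finiteness. That self-contained argument has a genuine gap.

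The gap is the asserted equivalence ``a family of irreducibles generates $Z(G)$ exactly when their highest weights generate the monoid of dominant weights.'' The implication you actually use (ring generation $\Rightarrow$ monoid generation) is false. Concretely, in $R(SU(3))$ one has $\chi_{\omega_1}^2=\chi_{2\omega_1}+\chi_{\omega_2}$, so the two irreducible characters $\chi_{\omega_1},\chi_{2\omega_1}$ generate the representation ring (they yield $\chi_{\omega_2}=\chi_{\omega_1}^2-\chi_{2\omega_1}$), yet the weights $\omega_1, 2\omega_1$ generate only the submonoid $\bN\omega_1$ of the dominant weights. Consequently, knowing that a based automorphism carries the fundamental family to another generating family of basis elements does \emph{not} let you conclude that it permutes the fundamental representations, and the injection into $S_r$ is unjustified. (A secondary issue: for a non--simply-connected simple group, e.g.\ $PSU(3)$, the dominant-weight monoid is not free, so the freeness input fails even before this.) The finiteness you want is nonetheless true and can be salvaged without the false step: a based ring automorphism preserves the Frobenius--Perron dimension, which for these fusion rules is the classical dimension; it is determined by its values on the finitely many generators $\chi_{\omega_1},\dots,\chi_{\omega_r}$; and a simple compact Lie group has only finitely many irreducibles of any given dimension. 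Hence the based automorphism group, and with it $\out(G)$ (which embeds into it, as you correctly argue), is finite; alternatively one can, as the paper does, quote \cite{iss} directly, after which \Cref{cor.lie} and the preceding proposition finish the proof exactly as you outline.
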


\begin{proof}
This follows from the previous proposition and from the fact that for a compact quantum group $G$ with fusion rules isomorphic to that of a simple compact Lie group, the group $\out(G)$ can only have order $1,2,3$ or $6$ (see Proposition 3.8 of \cite{iss}). 
\end{proof}

In order to state the following auxiliary result that has come up in
the above proof, we place ourselves in the following context: $A$ is a
topological complex algebra, $C\le A$ is a finite-dimensional
linear subspace that generates $A$, and $\aut(A,C)$ is the group of
automorphisms of $A$ that leave $C$ invariant.

Under the above setup, restricting elements of $\aut(A,C)$ to $C$
gives rise to an inclusion
\begin{equation*}
  \aut(A,C) \subseteq \aut(C)\cong PGL(C). 
\end{equation*}
As before, we topologize the group $\aut=\aut(A)$ pointwisely using
the topology of $A$: the neighborhoods of $1\in\aut$ are given by 
\begin{equation}\label{eq:top}
  \{g\in\aut\ |\ ga_i-a_i,\ g^{-1}a_i-a_i\ \in\ U,\ \forall i\}
\end{equation}
for various choices of neighborhoods $U\subseteq A$ of $0$ and finite
sets $\{a_i\}\subset A$.

\begin{remark}
  We have included both $g$ and $g^{-1}$ in \Cref{eq:top} in order to
  ensure that $\aut$ is a topological {\it group} and hence the
  inverse is continuous. 

  The omission of $g^{-1}$ in the analogous definition from \cite{iss}
  (see the discussion preceding Theorem 3.4 therein) makes no
  difference, since in that case the action is by isometries with
  respect to a $C^*$ norm on $A$. 
\end{remark}

%%%%%%%%%%%%%%%%%%%%%%%%%%%%%%%%%%%%%%%%%%%%%%%%%%%%%%%%%%%%%%%%%%%%%%%%%%%%%%%%%%%%%%%%%%%
%%%%%%%%%%%%%%%%%%%%%%%%%%%%%%%%%%%%%%%%%%%%%%%%%%%%%%%%%%%%%%%%%%%%%%%%%%%%%%%%%%%%%%%%%%%
\section{Some applications to CQG dynamical systems}\label{se.appl}

In this section we use the results of last section, to draw some
quick conclusions about spectral properties of CQG dynamical systems
in certain cases.

In the paper \cite{MP16}, the notion of CQG dynamical systems was introduced and studied, which is a tuple $(G,\Gamma,\alpha)$, where $G$ is a compact quantum group and $\Gamma$ acts on $C(G)$ by quantum group automorphisms, with $\alpha$ denoting the action. Thus, we have a group homomorphism $\alpha:\Gamma\rightarrow \Aut(G)$. 

\begin{definition}
Let $(G,\Gamma,\alpha)$ be a CQG dynamical system. Let $\norm{\cdot}$ denote the $C^\ast$-norm on $A$.
\begin{enumerate}
\item  \cite{Av} We say that the action is almost periodic if given any $a\in A$, the set $\{\alpha_{\gamma}(a): \gamma\in \Gamma\}$ is relatively compact in $A$ with respect to $\norm{\cdot}$.
\item We say that the action is compact if given any $a\in A$, the set $\{\alpha_{\gamma}(a)\Omega_{h}: \gamma\in \Gamma\}$ is relatively compact in $L^{2}(A)$ with respect to the $\norm{\cdot}_{2,h}$.
\item The extended action of $\Gamma$ on $L^{\infty}(G)$ is compact if given any $a\in L^{\infty}(G)$, the set $\{\alpha_{\gamma}(a)\Omega_{h}: \gamma\in \Gamma\}$ is relatively compact in $L^{2}(A)$ with respect to the $\norm{\cdot}_{2,h}$.
\end{enumerate}
\end{definition}

The following theorem was proved in \cite{MP16} (see Theorem 3.9)
\begin{theorem}\label{Cpt}
Let $(G,\Gamma,\alpha)$ be a CQG dynamical system. TFAE:\\
$(i)$ The action is almost periodic$;$\\
$(ii)$ the action is compact$;$ \\
$(iii)$ the orbit of any irreducible representation in \emph{Irr}$(G)$ is finite$;$\\
$(iv)$ the extended action of $\Gamma$ on $L^{\infty}(G)$ is compact.
\end{theorem}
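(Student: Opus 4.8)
The plan is to prove the four conditions equivalent by establishing the cycle $(iii)\Rightarrow(i)\Rightarrow(ii)\Rightarrow(iii)$, which already settles the equivalence of the first three, and then to fold in $(iv)$ by proving $(iii)\Rightarrow(iv)$ together with the essentially trivial $(iv)\Rightarrow(ii)$. Throughout I would lean on two structural facts about the action. First, each $\alpha_\gamma$ is a $*$-automorphism of the $C^*$-completion preserving the Haar state $h$, hence is isometric for the $C^*$-norm and extends to a unitary $U_\gamma$ on $L^2(A)$ via $U_\gamma(a\Omega_h)=\alpha_\gamma(a)\Omega_h$. Second, $\alpha_\gamma$ permutes the simple subcoalgebras of $A$ and therefore permutes the finite-dimensional, mutually orthogonal coefficient spaces $H_u\subset L^2(A)$ indexed by $u\in\mathrm{Irr}(G)$, the induced permutation of $\mathrm{Irr}(G)$ being exactly the one governing condition $(iii)$.

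For $(iii)\Rightarrow(i)$: any $a\in A$ lies in a finite sum $\bigoplus_i H_{u_i}$ of coefficient spaces of finitely many irreducibles $u_i$, and since by $(iii)$ each orbit $\Gamma\cdot u_i$ is finite, the whole orbit $\{\alpha_\gamma(a)\}$ is contained in the \emph{finite-dimensional} span of the coefficient spaces of $\bigcup_i\Gamma\cdot u_i$; as $\alpha_\gamma$ is isometric this orbit is norm-bounded, hence relatively compact there. The implication $(i)\Rightarrow(ii)$ is immediate from $\norm{b\Omega_h}_{2,h}^2=h(b^*b)\le\norm{b}^2$, which makes $b\mapsto b\Omega_h$ a contraction from the $C^*$-norm into $L^2(A)$ and so carries relatively compact sets to relatively compact sets. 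For $(ii)\Rightarrow(iii)$ I would test $(ii)$ on the character $a=\chi_u$ of an irreducible $u$: then $\alpha_\gamma(\chi_u)=\chi_{\gamma\cdot u}$, and since characters of inequivalent irreducibles satisfy $h(\chi_v^*\chi_w)=\delta_{[v],[w]}$ the family $\{\chi_{\gamma\cdot u}\Omega_h\}$ is orthonormal; an infinite orthonormal set has no convergent subsequence, so relative compactness forces the orbit of $u$ to be finite.

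It remains to handle $(iv)$. The implication $(iv)\Rightarrow(ii)$ is trivial because $A\subseteq L^\infty(G)$, so $(iv)$ applied to elements of $A$ is exactly $(ii)$. The substantive step, which I expect to be the main obstacle, is $(iii)\Rightarrow(iv)$: one must pass from finiteness of the orbits of irreducibles to relative compactness of $\{U_\gamma\xi\}$ for an \emph{arbitrary} $\xi=a\Omega_h$ with $a\in L^\infty(G)$, i.e.\ an arbitrary vector of $L^2(A)$, which may have infinitely many nonzero components in the Peter--Weyl decomposition $L^2(A)=\bigoplus_{u}H_u$. The idea is that $(iii)$ groups these blocks into \emph{finite-dimensional} $\Gamma$-invariant subspaces (one per orbit), so that $U_\gamma$ commutes with the orthogonal projection $P_N$ onto any finite union of orbits. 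Given $\varepsilon>0$, choose $N$ with $\norm{\xi-P_N\xi}_{2,h}<\varepsilon$; then $\norm{U_\gamma\xi-U_\gamma P_N\xi}_{2,h}=\norm{\xi-P_N\xi}_{2,h}<\varepsilon$ for all $\gamma$, while $\{U_\gamma P_N\xi\}$ lies in a fixed finite-dimensional invariant subspace and is bounded by $\norm{\xi}_{2,h}$, hence totally bounded. Thus $\{U_\gamma\xi\}$ is $\varepsilon$-close to a totally bounded set for every $\varepsilon$, so it is itself totally bounded and, $L^2(A)$ being complete, relatively compact. The crux is precisely organizing the Peter--Weyl blocks into finite-dimensional invariant pieces and running this uniform $\varepsilon$-approximation; the remaining implications are comparatively formal.
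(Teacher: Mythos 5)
Your proposal is correct, but there is nothing in the paper to compare it against: the paper does not prove this theorem at all, it simply quotes it as Theorem 3.9 of \cite{MP16}. Judged on its own, your cycle $(iii)\Rightarrow(i)\Rightarrow(ii)\Rightarrow(iii)$ together with $(iii)\Rightarrow(iv)\Rightarrow(ii)$ does establish all four equivalences, and each step is sound: quantum group automorphisms preserve the Haar state, hence act isometrically on the $C^*$-completion and unitarily on $L^2(A)$; they permute the Peter--Weyl blocks $H_u$ compatibly with the induced action on $\mathrm{Irr}(G)$; characters of inequivalent irreducibles are orthonormal with respect to $h$ (the very fact the paper exploits in \Cref{Open}); and in $(iii)\Rightarrow(iv)$ the projection $P_N$ onto a finite union of (finite) orbits commutes with every $U_\gamma$ because its range is $\Gamma$-invariant and $U_\gamma$ is unitary, which is exactly what makes your uniform $\varepsilon$-approximation argument close.

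Two points should be tightened. First, in $(iii)\Rightarrow(i)$ you only treat $a$ in the Hopf $*$-algebra; if, as in \cite{Av} and \cite{MP16}, almost periodicity quantifies over the whole $C^*$-algebra $C(G)$ rather than the dense CQG algebra, you need one more line: approximate $a$ in norm by $a'$ in the dense Hopf $*$-algebra and use isometry of the $\alpha_\gamma$ to place the orbit of $a$ inside the $\varepsilon$-neighborhood of the totally bounded orbit of $a'$ --- the same trick you already deploy for $(iii)\Rightarrow(iv)$. Second, in $(iii)\Rightarrow(iv)$ you should record why $\alpha_\gamma(a)\Omega_h=U_\gamma(a\Omega_h)$ for $a\in L^\infty(G)$: the normal extension of $\alpha_\gamma$ is implemented by conjugation by $U_\gamma$, and $U_\gamma\Omega_h=\Omega_h$. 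With these additions the proof is complete, and it is in fact more elementary and self-contained than the machinery the paper invokes where it \emph{uses} this theorem (in \Cref{VInn} and \Cref{Outfin}), namely the characterization of almost periodic $C^*$-dynamical systems through compactness of $\overline{\alpha(\Gamma)}$ in $\Aut(C(G))$ taken from \cite{Gr}.
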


It then follows immediately that if the action is ``inner'' in the sense that $\alpha(\Gamma)\subset \Aut_\chi(G)$, then the corresponding CQG dynamical system $(G,\Gamma,\alpha)$ is compact. This is because it follows from the definition of $\Aut_\chi(G)$ that the induced action of $\Gamma$ on Irr$(G)$ is trivial. In this context, a pertinent question to ask is whether one can obtain a result converse to this. In other words, suppose we have a compact CQG dynamical system, to what extent is that action ``inner''. It was observed in \cite{MP16}, that for a compact quantum group $G$, with $\out(G)$ discrete, any compact CQG dynamical system is virtually inner. Thus, in case $G$ is a compact matrix quantum group, we have the following

\begin{theorem}\label{VInn}
Let $G$ be a compact matrix quantum group. Let $(G,\Gamma,\alpha)$ be a compact CQG dynamical system. Then the subgroup  
$$\Gamma_{\chi}:= \{\gamma\in \Gamma : \alpha_\gamma\in \Aut_{\chi}(G)\}$$
of $\Gamma$ is of finite index.
\end{theorem}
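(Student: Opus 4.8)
The plan is to reinterpret the index $[\Gamma:\Gamma_\chi]$ in representation-theoretic terms and then play the compactness hypothesis off against the compact \emph{matrix} structure of $G$. Recall from the discussion preceding \Cref{fin} that $\Aut_\chi(G)$ is precisely the kernel of the permutation action $\Aut(G)\to S_{\mathrm{Irr}(G)}$ on the set of irreducible classes. Consequently $\Gamma_\chi=\alpha^{-1}(\Aut_\chi(G))$ is exactly the kernel of the composite homomorphism $\Gamma\xrightarrow{\alpha}\Aut(G)\to S_{\mathrm{Irr}(G)}$, so that $\Gamma/\Gamma_\chi$ is isomorphic to the image $P\le S_{\mathrm{Irr}(G)}$ of $\Gamma$ in the symmetric group. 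The theorem thus reduces to showing that $P$ is finite.

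First I would record what compactness buys: by \Cref{Cpt}, a compact dynamical system has the property that the $\Gamma$-orbit of every $[v]\in\mathrm{Irr}(G)$ is finite. By itself this only says that each point-orbit of $P$ is finite, which is far from forcing $P$ to be a finite group; the extra input must come from the matrix structure of $G$. Fixing a fundamental representation $u$ whose coefficients generate the underlying CQG algebra, every irreducible corepresentation of $G$ is a subobject of some tensor product of copies of $u$ and $\bar u$; equivalently, the finite set $S\subset\mathrm{Irr}(G)$ of irreducible classes occurring in $u\oplus\bar u$ generates the fusion ring $Z(G)$ as a ring.

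The key step is then the following determinacy statement. Each $\alpha_\gamma$ is a Hopf $*$-algebra automorphism, hence induces a ring automorphism of $Z(G)$ that permutes the distinguished basis $\mathrm{Irr}(G)$, and this induced permutation is exactly the image of $\gamma$ in $S_{\mathrm{Irr}(G)}$. Since a ring automorphism of $Z(G)$ is determined by its values on the generating set $S$, two elements of $\Gamma$ that agree on $S$ induce the same permutation of all of $\mathrm{Irr}(G)$. In other words, the restriction map $P\to \prod_{s\in S}\mathrm{Orb}_\Gamma(s)$, $p\mapsto (p\cdot s)_{s\in S}$, is injective. Combining the two ingredients finishes the argument: each factor $\mathrm{Orb}_\Gamma(s)$ is finite by \Cref{Cpt}, and $S$ is finite, so the target $\prod_{s\in S}\mathrm{Orb}_\Gamma(s)$ is finite; the injection then forces $P\cong\Gamma/\Gamma_\chi$ to be finite.

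I expect the main obstacle to be exactly that middle step — verifying cleanly that the permutation of $\mathrm{Irr}(G)$ induced by an automorphism is determined by its action on the finite fusion generators $S$, and that such a finite $S$ exists at all. This is where the compact \emph{matrix} hypothesis is indispensable, since it is what supplies a finite ring-generating set for $Z(G)$, and where one must be careful that automorphisms genuinely respect tensor products and duals so as to act by fusion-ring automorphisms on the basis $\mathrm{Irr}(G)$. As an alternative packaging, one may instead invoke \Cref{th.out} to the effect that $\out(G)$ is discrete for a CMQG and then quote the observation of \cite{MP16} that any compact dynamical system over a $G$ with discrete $\out(G)$ is virtually inner; the argument above is a self-contained version of that reasoning.
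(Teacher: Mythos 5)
Your opening reduction is fine: $\Aut_\chi(G)$ is the kernel of $\Aut(G)\to S_{\mathrm{Irr}(G)}$, so $\Gamma/\Gamma_\chi$ is the image $P$ of $\Gamma$ in the symmetric group, and \Cref{Cpt} does give finiteness of every point-orbit. The genuine gap is your key middle step: the claim that the finite set $S$ of irreducible constituents of $u\oplus\bar{u}$ generates the fusion ring $Z(G)$ as a ring. The ``equivalently'' you use to get this from the compact matrix structure is false. That every irreducible class appears as a constituent of some tensor word in $u,\bar u$ only says that each basis element $[v]$ of $Z(G)$ occurs with positive coefficient in some monomial in $S$; to exhibit $[v]$ itself as an element of the subring generated by $S$ one would have to subtract off the other constituents of that monomial, which need not lie in the subring. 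In fact this is exactly the point of \Cref{se.counter} of the paper: \Cref{cor.not-fg} and \Cref{cor.counter} construct a compact \emph{matrix} quantum group (a bicrossed product $C(\bZ/2)\sharp A_u(n)$) whose fusion ring is not finitely generated as a ring, so no finite $S$ with your property can exist in general. Your argument is sound precisely when $Z(G)$ \emph{is} finitely generated --- but that case is \Cref{fin}, and the discussion preceding \Cref{th.out} stresses that this hypothesis fails for some CMQGs, which is why a different proof of discreteness of $\out(G)$ is needed.

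Your closing ``alternative packaging'' is, in essence, the paper's actual proof: compactness of the system gives (via Theorem 2.2 and Corollary 2.7 of \cite{Gr}, or the almost-periodicity equivalence in \Cref{Cpt}) that the closure $H$ of $\alpha(\Gamma)$ in $\Aut(G)$ is compact; \Cref{th.out} says $\Aut_\chi(G)$ is open in $\Aut(G)$; hence $H\cap\Aut_\chi(G)$ is an open subgroup of the compact group $H$, so of finite index, and pulling back along $\alpha$ gives $[\Gamma:\Gamma_\chi]<\infty$. But note this is \emph{not} a repackaging of your main argument: \Cref{th.out} is proved by a Lie-theoretic path-connectedness argument (the isotropy group of the fundamental representation embeds as a closed subgroup of $PGL_n(\bC)$, and elements of a connected identity neighborhood must fix each simple subcoalgebra), not by fusion-ring finite generation. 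So the fallback is correct and is the intended route, while the self-contained version as written cannot be repaired along the lines you propose.
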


\begin{proof}
  Recall that any $C^\ast$-dynamical system $(A,\Gamma,\alpha)$ is
  almost periodic if and only if the closure of the image of $\Gamma$
  under $\alpha$ in $\Aut(A)$ is compact in the pointwise norm
  topology $($see Theorem 2.2 and Corollary 2.7 of \cite{Gr}$)$. Thus,
  since $(G,\Gamma,\alpha)$ is compact, it follows that
  $H:=\overline{\Gamma}\subseteq \Aut(G)\subseteq \Aut(A)$ is
  compact. But we have, by \Cref{th.out} $\Out_\chi(G)$ is discrete,
  equivalently $\Aut_\chi(G)$ is an open subgroup of $\Aut(G)$. So,
  $H_\chi=\Aut_\chi(G)\cap H$ is open in $H$. Since $H$ is compact, so
  $H_\chi$ is of finite index in $H$. Consequently, $\Gamma_\chi$ is
  finite index subgroup of $\Gamma$.
\end{proof}

The next result uses the ideas of CQG dynamical systems to give a structural result on automorphism groups.

\begin{theorem}\label{Outfin}
Suppose that $G$ is a compact quantum group and suppose that $u=((u_{ij}))$ is a $m$-dimensional representation of $G$ such that the $C^\ast$-algebra generated by $u_{ij}, i,j\in \{1,2,...,m\}$ in $C_m(G)$ is itself $C_m(G)$. Suppose further that up to unitary equivalence, there are only finitely many $n$-dimensional  representations of $G$. Then, $\Aut(G)$ and $\Aut_\chi(G)$ are compact Lie groups and $\out(G)$ is a finite group.
\end{theorem}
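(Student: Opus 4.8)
The plan is to reduce the entire statement to the single assertion that $\Aut(G)$ is \emph{compact}. Since the coefficients $u_{ij}$ generate $C_m(G)$, the quantum group $G$ is a compact matrix quantum group; thus \Cref{cor.lie} already gives that $\Aut_\chi(G)$ is a compact Lie group, and \Cref{th.out} gives that $\Aut_\chi(G)$ is \emph{open} in $\Aut(G)$ (equivalently $\out(G)$ is discrete). Granting compactness of $\Aut(G)$, the quotient $\out(G)=\Aut(G)/\Aut_\chi(G)$ is at once compact (a continuous image of a compact group) and discrete, hence finite; and $\Aut(G)$, being compact and containing the open finite-index compact Lie subgroup $\Aut_\chi(G)$, is itself a compact Lie group. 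So everything rests on compactness of $\Aut(G)$.

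To establish this I would localize at the isotropy group $\Aut_U=\{\alpha\in\Aut(G):\alpha(C)=C\}$ of the generating coalgebra $C=\mathrm{span}\{u_{ij}\}$, i.e. the stabilizer of the class $[u]$. Applying $\alpha$ to the coefficients of $u$ produces a representation $u^\alpha=\sum e_{ij}\otimes\alpha(u_{ij})$ of the same dimension $m$; by the finiteness hypothesis there are only finitely many classes of representations of that dimension, so the $\Aut(G)$-orbit of $[u]$ is finite. Orbit--stabilizer then shows $\Aut_U$ has finite index, say $k$, in $\Aut(G)$, and $\Aut(G)=\bigsqcup_{i=1}^{k}g_i\Aut_U$ is a finite union of translates of $\Aut_U$.

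The crux is to show $\Aut_U$ is compact. As in the proofs of \Cref{th.out} and \Cref{le.out-aux}, restriction to the generating subspace gives a closed embedding $\Aut_U\hookrightarrow\aut(C)$ of topological groups (injective because $C$ generates $A$, a homeomorphism onto its image by \Cref{pr.same-top}, with closed image by \Cref{le.out-aux}). The decisive extra observation is that, viewing $\aut(C)$ through its linear action on the finite-dimensional space $C$, this image lands in a \emph{compact} group: every $\alpha\in\Aut(G)$ is a $*$-automorphism preserving the Haar state $h$, so $h(\alpha(b)^*\alpha(a))=h(\alpha(b^*a))=h(b^*a)$ for all $a,b$; hence each $\alpha\in\Aut_U$ restricts to a linear isometry of the finite-dimensional inner product space $(C,\langle a,b\rangle:=h(b^*a))$, i.e. to an element of the unitary group $U(C)$. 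A closed subgroup of the compact group $U(C)$ is compact, so $\Aut_U$ is compact, and therefore so is the finite union $\Aut(G)=\bigsqcup_i g_i\Aut_U$.

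I expect the compactness of $\Aut_U$ to be the main obstacle, and specifically the bookkeeping that the purely algebraic embedding $\Aut_U\hookrightarrow\aut(C)$ is a homeomorphism onto a \emph{closed} subgroup whose linear action on $C$ lands inside $U(C)$: this is where \Cref{pr.same-top}, \Cref{le.out-aux}, and the $L^2$-isometry property must be combined so that closedness upgrades to compactness. An alternative route, closer to the dynamical spirit of this section, would avoid the isotropy group altogether: the finiteness hypothesis confines every orbit $\{\alpha(a):\alpha\in\Aut(G)\}$ to the fixed finite-dimensional span of the coefficient coalgebras of all $m$-dimensional representations, where norm-boundedness (each $\alpha$ being $C^*$-isometric) forces relative compactness. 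The tautological $C^*$-dynamical system is then almost periodic, and the criterion of \cite{Gr} invoked in the proof of \Cref{VInn} delivers compactness of $\Aut(G)$ directly.
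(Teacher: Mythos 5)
Your main argument is correct, and it is genuinely different from the paper's. The paper proves compactness of $\Aut(G)$ dynamically: since $C_m(G)$ is separable, $\Aut(G)$ has a countable dense subgroup $\Gamma$; the finiteness hypothesis makes the $\Gamma$-orbit of $[u]$ finite; Proposition~6.7 of \cite{MP16} propagates finiteness of orbits to all of $\mathrm{Irr}(G)$; \Cref{Cpt} then says the CQG dynamical system $(G,\Gamma)$ is compact, and the almost-periodicity criterion of \cite{Gr} (exactly as in \Cref{VInn}) gives compactness of $\overline{\Gamma}=\Aut(G)$. You instead argue directly: orbit--stabilizer makes $\Aut_U$ of finite index, and your decisive new observation --- that Haar-state preservation forces the restriction of $\Aut_U$ to the generating coefficient space to land in the \emph{unitary} group of the Haar inner product --- upgrades the closed embedding $\Aut_U\hookrightarrow PGL_n(\bC)$ used in \Cref{th.out} (which there only yields a Lie group) to a closed subgroup of a compact group. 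This is more elementary and self-contained, bypassing \cite{MP16}, \cite{Gr} and the separability step entirely, at the price of the bookkeeping you correctly identify (injectivity, homeomorphism onto the image, closedness of the image); it also isolates exactly where the finiteness hypothesis enters (finite index of $\Aut_U$). Both proofs share the forced endgame: compactness plus \Cref{th.out} and \Cref{cor.lie} yield all three conclusions.

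Two small repairs are needed. First, $C=\mathrm{span}\{u_{ij}\}$ generates the CQG algebra $A$ only as a $*$-algebra, whereas the relation-preservation argument of \Cref{le.out-aux} concerns algebra relations; you should replace $C$ by the $*$-stable coefficient space $C+C^*$ of $u\oplus\overline{u}$, which is automatically $\Aut_U$-invariant because automorphisms are $*$-maps, after which the closedness and extension arguments (and the unitarity observation) run verbatim. Second, your alternative dynamical sketch at the end is too loose as stated: for a general $a\in A$ the orbit $\{\alpha(a):\alpha\in\Aut(G)\}$ is \emph{not} contained in the span of coefficients of $m$-dimensional representations; one must either propagate orbit-finiteness from $[u]$ to all of $\mathrm{Irr}(G)$ (this is precisely the role of Proposition~6.7 of \cite{MP16} in the paper's proof), or argue that relative compactness of the orbits of the generators, together with the fact that each $\alpha$ is isometric, yields almost periodicity first on the dense $*$-subalgebra they generate and then on all of $C_m(G)$.
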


\begin{proof}
  Since $G$ is a compact matrix quantum group, it follows that
  $C_m(G)$ is separable, and therefore, $\Aut(C_m(G))$ is separable
  (in fact is Polish). It follows that $\Aut(G)$ is separable. Let now
  $\Gamma$ be a dense countable subgroup of $\Aut(G)$. Hence, we have
  a CQG dynamical system $(G,\Gamma)$. It follows then from the
  hypothesis that under the induced $\Gamma$-action on Irr$(G)$, the
  orbit $\{\Gamma[u]\}$ is finite (here $[u]$ denotes the equivalence
  class corresponding to the representation $u$ of $G$). Applying
  Proposition 6.7 of \cite{MP16}, we get then that the orbit of any
  element of Irr$(G)$ under the induced $\Gamma$-action is finite,
  whence it follows, using \Cref{Cpt}, that the CQG dynamical system
  $(G,\Gamma)$ is compact.

  However, since $\Gamma$ is dense in $\Aut(G)$, we then have that
  $\Aut(G)$ is compact, using Theorem 2.2 and Corollary 2.7 of
  \cite{Gr}. But, by \Cref{th.out}, $\out(G)$ is discrete for $G$, and
  since $\Aut(G)$ is compact, we have that $\out(G)$ is finite. Now,
  by \Cref{cor.lie}, $\Aut_\chi(G)$ is a compact Lie group and so,
  finiteness of $\out(G)$ implies that $\Aut(G)$ is also a compact Lie
  group. Hence, we are done.
\end{proof}

\begin{remark}
 Let us note that the condition mentioned above, of compact matrix quantum groups whose generating representation has some fixed dimension, say $m$, and there are only finitely many $m$-dimensional representations up to unitary equivalence, is satisfied by several examples, including the free unitary quantum groups, the free orthogonal quantum groups, the free permutation quantum groups, $q$-deformations of simple compact Lie groups, Rieffel deformations of simple compact Lie groups, etc.
\end{remark}

%%%%%%%%%%%%%%%%%%%%%%%%%%%%%%%%%%%%%%%%%%%%%%%%%%%%%%%%%%%%%%%%%%%%%%%%%%%%%%%%%%%%%%%%%%%
%%%%%%%%%%%%%%%%%%%%%%%%%%%%%%%%%%%%%%%%%%%%%%%%%%%%%%%%%%%%%%%%%%%%%%%%%%%%%%%%%%%%%%%%%%%
%\section{Computations}\label{se.comp}

%In this section we make computations to determine the automorphism group, inner automorphism group and outer automorphism group of certain compact quantum groups.

%%%%%%%%%%%%%%%%%%%%%%%%%%%%%%%%%%%%%%%%%%%%%%%%%%%%%%%%%%%%%%%%%%%%%%%%%%%%%%%%%%%%%%%%%%%
%%%%%%%%%%%%%%%%%%%%%%%%%%%%%%%%%%%%%%%%%%%%%%%%%%%%%%%%%%%%%%%%%%%%%%%%%%%%%%%%%%%%%%%%%%%
\section{CMQGs with infinitely generated fusion rings}\label{se.counter}

In this section we construct a compact matrix quantum group whose
complex fusion algebra is not finitely generated as an algebra. In
fact, the CQG algebra we consider is not only finitely generated
(hence corresponding to a CMQG), but even finitely {\it presented}.

Throughout the rest of this section $n\ge 2$ is understood to be an
arbitrary but fixed positive integer, and we denote $A=A_u(n)$. The CQG algebra $C$ that will provide the counterexample fits into an exact sequence
\begin{equation}\label{eq:ext}
        \begin{tikzpicture}[auto,baseline=(current  bounding  box.center)]
          \path[anchor=base] (0,0) node (c1) {$\bC$} +(2,0) node (z2) {$C(\bZ/2)$} +(4,0) node (c) {$C$} +(6,0) node (a) {$A$} +(8,0) node (c2) {$\bC$};
          \draw[->] (c1) -- (z2);
          \draw[->] (z2) -- (c);
          \draw[->] (c) -- (a);
          \draw[->] (a) -- (c2);          
        \end{tikzpicture}
\end{equation}
of Hopf algebras in the sense of \cite{ad}. The construction is also a
generalization of that of a bicrossed product as in \cite{fmp}, where
the compact group is $\bZ/2$, and the discrete group is the quantum
discrete dual to the free unitary group $U_n^+$ (i.e. the discrete
quantum group with group algebra $A$).

We construct $C$ as a bicrossed product $C(\bZ/2)\sharp A$ according
to the notation of \cite{ad}, following the recipe outlined in
\cite[Theorem 2.20]{ad}. According to (a very particular version of)
that result, all we need for such a construction to go through is a coaction
\begin{equation*}
  A\to A\otimes C(\bZ/2)
\end{equation*}
(i.e. an action of the group $\bZ/2$ on $A$) satisfying certain
conditions. Unpacking that theorem, it follows that the construction
does indeed result in a Hopf algebra structure on
$C=C(\bZ/2)\otimes A$ if

\begin{itemize}
\item $\bZ/2$ acts on $A$ by Hopf algebra automorphisms;
\item $C$ is equipped with the tensor product algebra structure;
\item $C$ is equipped with the smash coproduct \cite[(2.15)]{ad}.  
\end{itemize}
In reference to the third bullet point, we apply \cite[(2.15)]{ad} by
taking $\rho:A\to A\otimes C(\bZ/2)$ to be our coaction defining the
$(\bZ/2)$-action and $\tau:A\to \bC(\bZ/2)^{\otimes 2}$ to be the
trivial ``cycle''
\begin{equation*}
  A\ni a\mapsto \varepsilon(a)(1\otimes 1)\in C(\bZ/2)^{\otimes 2}. 
\end{equation*}
Moreover, \Cref{eq:ext} is a {\it central} extension, in the sense
that $C(\bZ/2)$ is contained in the center of $C$.

The action of $\bZ/2=\langle \gamma\rangle$ on $A$ will be defined as
\begin{equation}\label{eq:act}
  \gamma:u_{ij}\leftrightarrow u_{ij}^*
\end{equation}
at the level of generators, and extended in the only way possible to a
$*$-algebra automorphism (the relations of $A$ ensure that this is
indeed possible). The $*$-structure on $C$ is simply the tensor
product $*$-structure on $C(\bZ/2)\otimes A$, and is easily seen to
make $C$ into a CQG algebra in the sense of \cite{DK}.

We now need to understand the fusion ring $R(C)$ of $C$, in order to
show that it is not finitely generated. In fact, we will prove this
for the complexified version of $R$, namely
\begin{equation*}
  R_{\bC} = R_\bC(C) = \bC\otimes_{\bZ}R(C)
\end{equation*}
(the fact that $R_\bC$ is not finitely generated as a complex algebra
is a priori stronger than $R(C)$ not being finitely generated as a ring).

\Cref{pr.cat-desc} below describes the comodule category of $C$,
achieving the above-stated goal. Before giving the statement we recall
various results on the representation theory of $U_n^+$ from
\cite{Ban97} (see Th\'eor\`eme 1 therein) and make a few preparatory
remarks.

The standard $A$-comodule $V$ whose underlying matrix coalgebra is
spanned by the generators $u_{ij}$ and its dual $V^*$ (with matrix
coalgebra spanned by $u_{ij}^*$) generate the fusion ring of
$U_n^+$. The irreducible representations of $U_n^+$ are in bijection
with the words on the alphabet
\begin{equation*}
  \alpha=[V],\ \beta=[V^*]
\end{equation*}
(classes in the fusion semiring $R_+(A)$), in the sense that for every
word the corresponding tensor product of copies of $V$ and $V^*$ has a
unique simple summand that does not appear as a summand in a strictly
shorter word.

One consequence of this is that the fusion {\it ring} $R(A)$ is free
(as a ring) on the generators $\alpha$ and $\beta$. Indeed, the
description of the fusion rules in \cite[Th\'eor\`eme 1 (i)]{Ban97}
makes it clear that $R(A)$ admits a filtration by degree with
\begin{equation*}
  \mathrm{deg}(\alpha)=1=\mathrm{deg}(\beta)
\end{equation*}
such that the associated graded ring is indeed free, and hence the
canonical map from the free ring on two generators sending these
generators to $\alpha$ and $\beta$ respectively is an isomorphism of
rings.

Now, the automorphism $\gamma$ defined as in \Cref{eq:act} implements
an action by monoidal autoequivalences on the category $\cM=\cM^A$ of
finite-dimensional comodules by twisting via the automorphism $\gamma$. We denote this twisting functor by $\gamma^*$. The monoidal autoequivalence 
\begin{equation*}
  \gamma^*:\cM\to \cM
\end{equation*}
interchanges $V$ and $V^*$, and hence the resulting action on the
fusion semiring $R_+(A)$ interchanges $\alpha$ and $\beta$.

Moreover, the action of $\bZ/2$ on $\cM$ allows us to define the {\it
  equivariantization} $\cM^{\bZ/2}$ consisting of objects $W\in \cM$
equipped with isomorphisms $\gamma^*W\cong W$ satisfying certain
compatibility conditions (we refer the reader to \cite[$\S$3]{nat} and
references therein for background and details on the
equivariantization construction).

With this in place, we have

\begin{proposition}\label{pr.cat-desc}
  The category $\cM^C$ of finite-dimensional $C$-comodules is
  equivalent to the equivariantization $\cM^{\bZ/2}$
\end{proposition}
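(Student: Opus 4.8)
The plan is to produce a monoidal equivalence directly, using the central sub-Hopf-algebra $C(\bZ/2)\subseteq C$ and the quotient $\pi:C\to A$. Conceptually, the extension \Cref{eq:ext} exhibits $\cM^{C(\bZ/2)}$ (the category of $\bZ/2$-representations) as a central subcategory of $\cM^C$ with ``quotient'' $\cM^A$; since the central subcategory is the representation category of the group $\bZ/2$, this is precisely the situation in which $\cM^C$ should be the $\bZ/2$-equivariantization of $\cM^A$ for the action $\gamma^*$ of \cite[$\S$3]{nat}. I would make this precise by writing down mutually inverse functors.

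For the functor $\Phi:\cM^C\to\cM^{\bZ/2}$, the starting observation is that, besides $\pi=\varepsilon_{C(\bZ/2)}\otimes\id_A:C\to A$, the smash coproduct \cite[(2.15)]{ad} also makes $p:=\id_{C(\bZ/2)}\otimes\varepsilon_A:C\to C(\bZ/2)$ a coalgebra map retracting the inclusion. Thus any $W\in\cM^C$ with coaction $\delta:W\to W\otimes C$ acquires two induced structures: pushing $\delta$ forward along $\pi$ gives an $A$-comodule $\overline W$, and pushing it forward along $p$ gives a $C(\bZ/2)$-comodule, that is an involution $\rho_\gamma\in\End(W)$. I would set $\Phi(W)=(\overline W,\rho_\gamma)$. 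The crucial step is to check that $\rho_\gamma$ is exactly an equivariant structure, i.e.\ an isomorphism $\gamma^*\overline W\xrightarrow{\sim}\overline W$ of $A$-comodules: here the coaction $\rho:A\to A\otimes C(\bZ/2)$ encoding the $\bZ/2$-action enters, because coassociativity of $\delta$ relative to the smash coproduct forces the $\gamma$-twist of the $A$-coaction on $\overline W$ to be intertwined by $\rho_\gamma$. The cocycle identity $\rho_\gamma\circ\gamma^*(\rho_\gamma)=\id$ then follows from $\rho_\gamma^2=\id$ together with the triviality of the cycle $\tau$.

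For the inverse $\Psi:\cM^{\bZ/2}\to\cM^C$, I would start from an equivariant object $(W,u_\gamma)$ and rebuild a $C$-coaction: the eigenspace decomposition of the involution $u_\gamma$ supplies the $C(\bZ/2)$-comodule structure (its isotypic components corresponding to the two group-likes $1$ and $e_1-e_\gamma$ of $C(\bZ/2)$), and splicing this with the given $A$-coaction through the smash coproduct produces a map $W\to W\otimes C$ whose coassociativity is precisely the equivariance of $u_\gamma$ read in reverse. Checking that $\Phi$ and $\Psi$ are mutually inverse on objects and morphisms is then formal once the two structural identities above are established.

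The last, and I expect most laborious, step is monoidality. The tensor product in $\cM^C$ is controlled by $\Delta_C$, and it must be matched with the tensor product of equivariant objects, which on underlying $A$-comodules is the ordinary one, the equivariant isomorphisms being combined through the monoidal constraint of $\gamma^*$ and the diagonal $\bZ/2$-action. The main obstacle will be precisely this bookkeeping: tracking how the single twist by $\rho$ in the smash coproduct simultaneously reproduces the $\gamma$-twist defining $\gamma^*$ and the diagonal action on tensor products, so that $\Phi$ respects the associativity and unit constraints. Once that is verified, the desired equivalence $\cM^C\simeq\cM^{\bZ/2}$ follows.
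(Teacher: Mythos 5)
Your proposal is correct, but it follows a genuinely different route from the paper. The paper's proof is a single remark: the statement is the dual of \cite[Proposition 3.5]{nat}, which handles \emph{cocentral} extensions of group algebras in the setting of semisimple Hopf algebras and categories of \emph{modules}, and the arguments there are said to dualize verbatim to the central extension \Cref{eq:ext} and the comodule setting. You instead construct the equivalence directly, and your skeleton holds up under scrutiny: $p=\id_{C(\bZ/2)}\otimes\varepsilon_A$ is indeed a coalgebra retraction of the inclusion (this is where triviality of $\tau$ and the fact that the $\bZ/2$-action preserves $\varepsilon_A$ enter); applying $\id_W\otimes\pi\otimes p$ to coassociativity of a $C$-coaction $\delta$ gives precisely $\overline{\delta}\circ\rho_\gamma=(\rho_\gamma\otimes\gamma)\circ\overline{\delta}$, i.e.\ the assertion that $\rho_\gamma$ is an $A$-comodule isomorphism $\gamma^*\overline{W}\to\overline{W}$; the cocycle condition collapses to $\rho_\gamma^2=\id$ because twisting by $\gamma^*$ does not change underlying linear maps; and $\delta$ is recovered from the pair $(\overline{\delta},\rho_\gamma)$ as $(\delta_{\bZ/2}\otimes\id_A)\circ\overline{\delta}$, which is exactly your inverse $\Psi$. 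What your approach buys is a self-contained argument that makes explicit where each ingredient (centrality, the coaction $\rho$, triviality of $\tau$) is used; what the paper's approach buys is brevity, by outsourcing the work to an existing result.

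One slip to fix: the tensor product in $\cM^C$ is governed by the \emph{multiplication} of $C$, not by $\Delta_C$ --- for comodules $V,W$ the coaction on $V\otimes W$ is $v\otimes w\mapsto v_{(0)}\otimes w_{(0)}\otimes v_{(1)}w_{(1)}$. This actually works in your favor: since $C$ carries the plain tensor product algebra structure on $C(\bZ/2)\otimes A$ and both $\pi$ and $p$ are algebra maps, $\Phi$ sends tensor products to tensor products essentially on the nose, and the monoidal constraint of $\gamma^*$ is the identity on underlying vector spaces because $\gamma$ is an algebra automorphism. So the step you flagged as the most laborious is in fact nearly automatic; the real content is the coassociativity/equivariance computation, which you correctly placed at the center of the argument.
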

\begin{proof}
  This is simply a dual version of \cite[Proposition 3.5]{nat}, where
  the case of a {\it cocentral} extension of a group algebra is
  treated in the context of semisimple Hopf algebras and categories of
  modules rather than comodules are considered. The arguments can be
  repeated virtually verbatim.
\end{proof}

It will be important for us (for reasons that will become clear below)
to work with the fixed subring $R(A)^{\bZ/2}$, or rather the complex
fixed subalgebra $R_\bC(A)^{\bZ/2}$. The discussion above identifies
the action of $\gamma\in \bZ/2$ on $R_{\bC}(A)$ with the
generator-interchange map on the free algebra
$\bC\langle \alpha,\beta\rangle$.

As a consequence of \Cref{pr.cat-desc}, we have

\begin{corollary}\label{cor.surj-fus}
  The fusion semiring $R_+(C)$ surjects onto the invariant part
  $R_+(A)^{\bZ/2}$ of the fusion semiring of $A$ under the action that
  interchanges $\alpha,\beta\in R_+(A)$.
\end{corollary}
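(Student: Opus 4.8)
The plan is to exploit the identification $\cM^C\cong \cM^{\bZ/2}$ furnished by \Cref{pr.cat-desc}, which transports the problem into one purely about the equivariantization. Under this equivalence the fusion semiring $R_+(C)$ becomes the fusion semiring of $\cM^{\bZ/2}$, and the desired surjection is to be induced by the forgetful monoidal functor
\begin{equation*}
  F:\cM^{\bZ/2}\to \cM,\qquad (W,\phi)\mapsto W,
\end{equation*}
which discards the equivariant structure $\phi:\gamma^*W\xrightarrow{\ \sim\ } W$. Since $F$ is monoidal it carries direct sums to direct sums and tensor products to tensor products, hence induces a semiring homomorphism $R_+(C)\to R_+(A)$. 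The first point to check is that the image lands inside the invariant part $R_+(A)^{\bZ/2}$, and this is immediate: any object admitting an equivariant structure satisfies $\gamma^*W\cong W$, so its class in $R_+(A)$ is fixed by the generator-interchange action of $\gamma$.

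The substance of the argument is to show that $F$ surjects onto $R_+(A)^{\bZ/2}$, for which I would invoke the standard classification of simple objects in an equivariantization (as in \cite[$\S$3]{nat} and the references therein). For the group $\bZ/2$ acting with the trivial cocycle, the simple objects of $\cM^{\bZ/2}$ are indexed by $\gamma$-orbits of simple objects of $\cM$ together with an irreducible representation of the stabilizer. Concretely, a $\gamma$-fixed simple $A$-comodule $W$ admits equivariant structures (two of them, corresponding to the two characters of the stabilizer $\bZ/2$), and the resulting simple equivariant objects map under $F$ to $[W]$; whereas a free orbit $\{W,\gamma^*W\}$ with $W\not\cong\gamma^*W$ glues into a single simple equivariant object whose image under $F$ is $[W]+[\gamma^*W]$.

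These two families of images --- the classes $[W]$ of $\gamma$-fixed simples and the orbit sums $[W]+[\gamma^*W]$ --- constitute an additive generating set for $R_+(A)^{\bZ/2}$, since an element of $R_+(A)$ expanded in the basis of simple classes is $\gamma$-invariant exactly when its coefficients are constant along $\gamma$-orbits. Consequently every element of $R_+(A)^{\bZ/2}$ arises in the image of $F$, yielding the asserted surjection. The step requiring the most care is the structural description of the simple equivariant objects and the verification that the forgetful functor acts on them as claimed; once that is in hand, the combinatorial matching with the invariant semiring is routine, and the surjectivity of $R_+(C)\to R_+(A)^{\bZ/2}$ follows at once.
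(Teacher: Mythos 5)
Your proposal is correct and follows the same route as the paper: both pass through the equivalence $\cM^C\simeq\cM^{\bZ/2}$ of \Cref{pr.cat-desc} and induce the map $R_+(C)\to R_+(A)$ from the forgetful functor. The difference is in how surjectivity onto $R_+(A)^{\bZ/2}$ is verified. The paper's proof only observes that every comodule of the form $\gamma^*W\oplus W$ lies in the essential image of the forgetful functor, so that the image contains all orbit sums $[W]+[\gamma^*W]$; by itself this produces only those invariant classes whose coefficients at $\gamma$-fixed simples are even, and the argument is complete only because the sole $\gamma$-fixed simple of $A_u(n)$ under the swap $\alpha\leftrightarrow\beta$ is the trivial comodule (no nonempty word in $\alpha,\beta$ is invariant under the letter-wise swap), whose class is the unit and hence automatically in the image. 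You instead invoke the classification of simple objects of the equivariantization: a $\gamma$-fixed simple carries two equivariant structures (here one implicitly uses that the obstruction in $H^2(\bZ/2,\bC^\times)$ vanishes, $\bC^\times$ being divisible) and forgets to $[W]$, while a free orbit glues to a simple forgetting to $[W]+[\gamma^*W]$. This handles fixed simples directly, and so proves the stronger assertion that the forgetful map surjects onto the invariants for an arbitrary $\bZ/2$-action, not just for this particular one. In short: same strategy, but your surjectivity step is finer and in fact fills a small gap that the paper's terser argument elides.
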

\begin{proof}
Consider the forgetful functor
\begin{equation*}
  \cat{forget}:\cM^{\bZ/2}\to \cM 
\end{equation*}
that forgets the equivariant structure on an object in
$\cM^{\bZ/2}$. The result follows by passing to the map
\begin{equation}\label{eq:r+forget}
  R_+(\cat{forget}):R_+(C)\to R_+(A)
\end{equation}
induced by $\cat{forget}$ at the level of fusion semirings, and
observing that {\it every} finite-dimensional $A$-comodule of the form
\begin{equation*}
  \gamma^* W\oplus W,\ W\in \cM
\end{equation*}
is in the essential image of $\cat{forget}$ and hence
\Cref{eq:r+forget} is surjective onto $R_+(A)^{\bZ/2}$.
\end{proof}

Finally, we obtain

\begin{corollary}\label{cor.not-fg}
  The complex fusion algebra $R_\bC(C)$ is not finitely generated. 
\end{corollary}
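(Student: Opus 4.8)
The plan is to leverage \Cref{cor.surj-fus}: if the complexified fusion algebra $R_\bC(C)$ were finitely generated, then so would be every quotient of it, so it is enough to exhibit a quotient that is visibly \emph{not} finitely generated. The first task is to upgrade the semiring surjection $R_+(C)\twoheadrightarrow R_+(A)^{\bZ/2}$ of \Cref{cor.surj-fus} to a surjection of complex algebras $R_\bC(C)\twoheadrightarrow R_\bC(A)^{\bZ/2}$. Passing from semirings to rings is group completion, which is a left adjoint and hence preserves surjections; and since $\bZ/2$ acts merely by permuting the $\bZ$-basis of $R(A)$ consisting of the irreducibles, the invariant ring $R(A)^{\bZ/2}$ is free abelian on the orbit sums, so that tensoring with $\bC$ commutes with taking invariants and yields $R_\bC(A)^{\bZ/2}=\bC\otimes_\bZ R(A)^{\bZ/2}$. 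As a quotient of a finitely generated algebra is finitely generated, it therefore suffices to prove that $R_\bC(A)^{\bZ/2}$ is not finitely generated as a $\bC$-algebra.

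Next I would use the identification, recorded just before the statement, of $R_\bC(A)$ with the free algebra $\bC\langle\alpha,\beta\rangle$, on which $\gamma$ acts by the generator swap $\alpha\leftrightarrow\beta$. This action preserves the grading by total degree, so the invariant subalgebra is graded and its Hilbert series is computed by a simple count: in degree $d\ge 1$ the swap permutes the $2^d$ length-$d$ words with \emph{no} fixed points (a word is fixed only if each of its letters is swap-invariant, which never happens), so all orbits have size two and the space of invariants has dimension $2^{d-1}$, while degree $0$ contributes $1$. Hence the Hilbert series of $R_\bC(A)^{\bZ/2}$ is
\begin{equation*}
  1+\sum_{d\ge 1}2^{d-1}t^d=\frac{1-t}{1-2t}.
\end{equation*}

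To turn this into non-finite-generation I would invoke the Galois theory of free algebras: by Kharchenko's theorem the fixed subalgebra of a free algebra under a finite group (valid here since $|\bZ/2|=2$ is invertible and the action is by graded automorphisms) is again free, say on $g_d$ homogeneous generators in each degree $d$, so its Hilbert series equals $1/\bigl(1-\sum_d g_d t^d\bigr)$. Matching this against $\tfrac{1-t}{1-2t}$ forces $\sum_{d}g_d t^d=t/(1-t)=t+t^2+t^3+\cdots$, i.e.\ exactly one new free generator in every positive degree; in particular there are infinitely many, so $R_\bC(A)^{\bZ/2}$ is not finitely generated. Alternatively one can cite directly the theorem of Dicks and Formanek that the fixed ring $T(V)^{G}$ of a tensor algebra is finitely generated precisely when $\dim V\le 1$ or $G$ acts by scalars; here $\dim V=2$ and the swap is plainly not scalar.

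This last step—establishing non-finite-generation of the free-algebra invariants—is the real content of the argument, and the only place a genuinely external input (Kharchenko freeness, or Dicks--Formanek) is needed. The reduction carried out in the first paragraph is essentially bookkeeping, the sole mild subtleties being the compatibility of the invariants functor with group completion and with the complexification, both of which are transparent once one observes that $\gamma$ acts by permuting a basis.
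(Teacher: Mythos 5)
Your proposal is correct, and its overall skeleton matches the paper's: both arguments use \Cref{cor.surj-fus} to produce a surjection of complex algebras $R_\bC(C)\twoheadrightarrow R_\bC(A)^{\bZ/2}\cong \bC\langle\alpha,\beta\rangle^{\bZ/2}$ and then conclude because quotients of finitely generated algebras are finitely generated. (Your extra bookkeeping is sound; the only quibble is the phrase ``left adjoints preserve surjections'' --- left adjoints preserve \emph{epimorphisms}, and ring epimorphisms need not be surjective, so one should instead just note that every element of the group completion is a difference of elements lifting along the semiring surjection. Since $\gamma$ permutes the basis of irreducibles, your claim that invariants commute with group completion and with $-\otimes_\bZ\bC$ is also fine.) Where you genuinely diverge is the key step, which the paper isolates as \Cref{le.not-fg-aux}: it proves non-finite-generation of $\bC\langle\alpha,\beta\rangle^{\bZ/2}$ by a self-contained combinatorial argument --- assuming generation in degrees $\le k$, reducing to products of two invariant basis elements, and exhibiting a bipartite (hypercube) structure on the degree-$(k+1)$ orbit sums on which a black-minus-white linear functional vanishes for all such products but equals $\pm 1$ on a single orbit sum. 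You instead compute the Hilbert series $\frac{1-t}{1-2t}$ of the invariant algebra and invoke Kharchenko's freeness theorem (forcing one free homogeneous generator in every positive degree), or alternatively cite Dicks--Formanek directly. Both routes are valid: the paper's is elementary and needs no external input, while yours is shorter modulo the cited theorems and yields strictly more information (the fixed ring is free on exactly one generator per degree, and Dicks--Formanek characterizes exactly when such fixed rings are finitely generated).
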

\begin{proof}
Indeed, \Cref{cor.surj-fus} ensures that the algebra in question surjects onto
\begin{equation*}
  R_\bC(A)^{\bZ/2}\cong \bC\langle \alpha,\beta\rangle^{\bZ/2}.
\end{equation*}
In turn, the latter algebra is not finitely generated by
\Cref{le.not-fg-aux} below.
\end{proof}

\begin{lemma}\label{le.not-fg-aux}
  Let $\bZ/2$ act on the free ring $F=\bC\langle \alpha,\beta\rangle$ by
  interchanging the generators $\alpha$ and $\beta$. Then, the
  invariant subalgebra $\bC\langle \alpha,\beta\rangle^{\bZ/2}$ is not
  finitely generated over $\bC$.
\end{lemma}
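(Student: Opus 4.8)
The plan is to use the word-length grading of $F=\bC\langle\alpha,\beta\rangle$, which the swap $\gamma:\alpha\leftrightarrow\beta$ preserves, so that $F^{\bZ/2}=\bigoplus_{d\ge 0}(F_d)^{\bZ/2}$ is a graded subalgebra. Writing $D_d=\sum_{e=1}^{d-1}(F_e)^{\bZ/2}\cdot(F_{d-e})^{\bZ/2}$ for the span of products of lower-degree invariants, finite generation of $F^{\bZ/2}$ would force $(F_d)^{\bZ/2}=D_d$ for every $d$ beyond the largest generator degree $N$, since any product of homogeneous generators of total degree $d>N$ must involve at least two factors and hence lie in $D_d$. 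So it suffices to exhibit, for every $d\ge 1$, an invariant of degree $d$ that is \emph{not} decomposable, i.e. to show $D_d\subsetneq (F_d)^{\bZ/2}$. Here I would first record the basic symmetry: for a length-$d$ word $w$ the image $\overline w$ differs from $w$ in \emph{every} position, so in particular $\overline w\ne w$, a homogeneous $v=\sum_w v_w\,w$ is invariant exactly when $v_{\overline w}=v_w$ for all $w$, and the elements $w+\overline w$ form a basis of $(F_d)^{\bZ/2}$.

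The crux is to construct a single linear functional that annihilates every decomposable but survives on some invariant. Let $\phi(w)=+1$ or $-1$ according as the \emph{last} letter of $w$ is $\alpha$ or $\beta$, and define $\Phi$ on $F_d$ by $\Phi(w)=\phi(w)$ if $w$ begins with $\alpha$ and $\Phi(w)=0$ otherwise. The key computation is that for $u\in(F_e)^{\bZ/2}$ and $v\in(F_f)^{\bZ/2}$ with $e,f\ge 1$, the first letter of a product word comes from $u$ and the last from $v$, so $\Phi$ factors:
\[
  \Phi(uv)=\Bigl(\sum_{a\text{ begins with }\alpha} u_a\Bigr)\Bigl(\sum_{b} v_b\,\phi(b)\Bigr).
\]
The second factor vanishes because $v$ is invariant: pairing $b$ with $\overline b$ gives $v_{\overline b}=v_b$ while $\phi(\overline b)=-\phi(b)$, so the sum cancels in pairs. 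Hence $\Phi$ annihilates all of $D_d$.

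It then remains to see that $\Phi$ is nonzero on $(F_d)^{\bZ/2}$, which is immediate from a good choice of witness: the invariant $\alpha^d+\beta^d$ has a single $\alpha$-initial word, namely $\alpha^d$, which ends in $\alpha$, so $\Phi(\alpha^d+\beta^d)=+1\ne 0$. Therefore $\alpha^d+\beta^d\notin D_d$ for every $d\ge 1$, producing an indecomposable invariant in each positive degree and contradicting finite generation.

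The step I expect to need the most care is the design of $\Phi$. The naive ``sign of the last letter'' functional $v\mapsto\sum_w v_w\,\phi(w)$ vanishes on \emph{all} invariants, by exactly the same $b\leftrightarrow\overline b$ cancellation, and is therefore useless for separating $D_d$ from $(F_d)^{\bZ/2}$. The essential trick is to first restrict attention to words beginning with $\alpha$: this breaks the symmetry on $\alpha^d+\beta^d$ (only $\alpha^d$ is seen), while keeping it intact inside each product $uv$, because there the offending cancellation is confined to the right-hand factor $v$, which is still fully invariant. Verifying that this left-restriction interacts correctly with multiplication — that passing to $\alpha$-initial words commutes with multiplying on the right, as used in the displayed factorization — is the one point I would check carefully rather than assert.
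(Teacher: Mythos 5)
Your proof is correct, and it differs from the paper's in how the key vanishing statement is established. Both arguments share the same skeleton: use the grading to reduce finite generation to the claim that in large degree every invariant is a sum of products of lower-degree invariants, then kill that claim with a linear functional that annihilates all such products but not all invariants. The paper, however, obtains its functional combinatorially: it reduces to products $w_1^*w_2^*$ of two basis invariants, observes that each such product is a sum $w^*+v^*$ of exactly two basis elements, organizes the basis of $(F_{k+1})^{\bZ/2}$ into a graph isomorphic to a hypercube with these sums as edges, and takes the (black)$-$(white) functional coming from the bipartition. Your $\Phi$ (restrict to $\alpha$-initial words, sign by the last letter) is in fact the same functional in disguise --- normalizing each pair $\{w,\overline w\}$ by its $\alpha$-initial representative, the hypercube bipartition classes are exactly ``ends in $\alpha$'' versus ``ends in $\beta$'' --- but your route to it is more direct and self-contained: the factorization $\Phi(uv)=\bigl(\sum_{a\ \alpha\text{-initial}}u_a\bigr)\bigl(\sum_b v_b\phi(b)\bigr)$, which holds simply because in a free algebra the first letter of $uv$ comes from $u$ and the last from $v$, replaces the verification that the graph is a hypercube and is bipartite, and it handles all of $D_d$ at once without first reducing to products of exactly two \emph{basis} invariants. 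The paper's version buys a slightly stronger conclusion along the way (every single $w^*$ of top degree is indecomposable, since the functional is $\pm1$ on each basis vector), whereas you exhibit one explicit witness $\alpha^d+\beta^d$ per degree, which is all the lemma needs. Your closing concern about the left-restriction commuting with right multiplication is exactly the point that makes the argument work, and your check of it is sound.
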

\begin{proof}
  For a word $w$ consisting of letters $\alpha$ and $\beta$, denote by
  $w^\gamma$ the result of applying the transformation
  $\alpha\leftrightarrow \beta$ to $w$ and by $w^*$ the sum
  $w+w^\gamma$. As a vector space, $F^{\bZ/2}$ has a basis consisting
  of all distinct elements of $F$ of the form $w^*$.

  Note that $F$ has a grading by
  $\mathrm{deg}(\alpha)=\mathrm{deg}(\beta)=1$. Now, assuming by
  contradiction that $F^{\bZ/2}$ is finitely generated, we may as well
  suppose that the set of elements $w^*$ with $\mathrm{deg}(w)\le k$
  generates for some $k>1$. We now seek to achieve a contradiction by
  showing that the subalgebra generated by such $w^*$ cannot contain
  the degree-$(k+1)$ component $(F^{\bZ/2})_{k+1}$.

  To this aim, let $w^*\in F^{\bZ/2}$ be an arbitrary element with
  $\mathrm{deg}(w)=k+1$, and assume it can be written as a linear combination of products
  \begin{equation}\label{eq:prod-t}
    w_1^* \cdots w_t^*,\ \sum_{i=1}^t \deg(w_i) = k+1,\ \deg(w_i)>0. 
  \end{equation}
Note furthermore that a product \Cref{eq:prod-t} is contained in the span of products
\begin{equation}
  \label{eq:prod-2}
  w_1^* w_2^*,\ \deg(w_1)+\deg(w_2) = k+1,\ \deg(w_i)>0,
\end{equation}
so we may as well assume we are only dealing with these latter ones
(i.e. $t=2$ in all terms \Cref{eq:prod-t} in the decomposition of
$w^*$).

We construct a graph having 
\begin{equation*}
  V_{k+1} = \{w^*\ |\ \deg(w)=k+1\}
\end{equation*}
as vertices by placing an edge between $w^*$ and $v^*$ whenever 
\begin{equation*}
  w_1^*w_2^* = w^*+v^*
\end{equation*}
for some $w_i$ as in \Cref{eq:prod-2}. The graph $\Gamma_{k+1}$ in
question is (isomorphic to) the hypercube with $2^k$ vertices, and a
linear combination of elements $w^*$ for $\deg(w)=k+1$ can be regarded
as a linear combination of vertices of this hypercube. Now,
$\Gamma_{k+1}$ is bipartite and we will henceforth refer to the
vertices in the two parts as black and white respectively. Each
product \Cref{eq:prod-2} assigns equal coefficients to two vertices of
opposite colors and coefficient zero to every other vertex. In other
words, the linear invariant
\begin{equation}\label{eq:bw}
  (\text{sum of coefficients of black vertices})-(\text{sum of coefficients of white vertices})
\end{equation}
vanishes for linear combinations of products of the form
\Cref{eq:prod-2}. On the other hand, a single element $w^*$,
$\deg(w)=k+1$, when regarded as a linear combination of vertices of
$\Gamma_{k+1}$, assigns coefficient $1$ to a single vertex and zero to
all other vertices. In conclusion, its invariant \Cref{eq:bw} is
$\pm 1$, contradicting the assumption that $w^*$ can be written as a
linear combination of \Cref{eq:prod-2}.
\end{proof}

Finally, we can phrase the conclusion of the present section as follows:

\begin{corollary}\label{cor.counter}
  There exist compact quantum groups whose underlying CQG algebra is finitely presented and whose complexified fusion algebra is not finitely generated. 
\end{corollary}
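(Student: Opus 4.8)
The plan is to take the CQG algebra $C=C(\bZ/2)\sharp A$ with $A=A_u(n)$ that has been constructed throughout this section and simply verify the two clauses of the statement separately. The non-finite-generation of the complexified fusion algebra $R_\bC(C)$ is already in hand: it is precisely the content of \Cref{cor.not-fg}, which in turn rests on \Cref{cor.surj-fus,le.not-fg-aux}. So the only point that still requires comment is that the underlying algebra $C$ is finitely presented.

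For that, I would use the fact, recorded in the bulleted description of the construction, that $C$ carries the \emph{tensor product} algebra structure, i.e. $C=C(\bZ/2)\otimes A$ as a $\bC$-algebra. Both tensor factors are finitely presented over $\bC$: by definition $A=A_u(n)$ is generated by the finitely many $u_{ij}$, $1\le i,j\le n$, subject to the finitely many relations making $(u_{ij})_{i,j}$ and $(u_{ij}^*)_{i,j}$ unitary, while $C(\bZ/2)$ is finite-dimensional and hence trivially finitely presented (for instance generated by a single idempotent). It then remains to invoke the elementary observation that a tensor product of two finitely presented $\bC$-algebras is again finitely presented: writing each factor as a free algebra modulo finitely many relations, the tensor product is the free algebra on the disjoint union of the two generating sets, modulo the two sets of relations together with the finitely many commutation relations forcing the generators of one factor to commute with those of the other. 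Applying this to $C(\bZ/2)\otimes A$ exhibits $C$ as finitely presented.

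I do not expect a substantive obstacle in this final corollary; it is essentially a bookkeeping statement that packages the output of the section. All the genuine difficulty has already been absorbed into \Cref{le.not-fg-aux} (the hypercube bipartiteness argument showing $\bC\langle\alpha,\beta\rangle^{\bZ/2}$ is not finitely generated) and its propagation through \Cref{pr.cat-desc,cor.surj-fus,cor.not-fg}; the only mild care needed here is to make sure the finite presentability is asserted at the level of the plain algebra structure, where the tensor product decomposition is available, rather than at the level of the twisted coalgebra structure.
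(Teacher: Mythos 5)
Your proposal is correct and follows essentially the same route as the paper: the paper's proof also takes $C$ itself, cites \Cref{cor.not-fg} for the fusion-algebra half, and observes that $C$ is finitely presented because as an algebra it is just the tensor product $C(\bZ/2)\otimes A$ with $A=A_u(n)$ finitely presented. The only difference is that you spell out the (standard) presentation of a tensor product of finitely presented algebras, which the paper leaves implicit.
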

\begin{proof}
  $C$ is such an object: as an algebra it is simply the tensor product
  $C(\bZ/2)\otimes A$, and is hence finitely presented because $A$
  is. On the other hand, the non-finite generation of the fusion
  algebra is \Cref{cor.not-fg} above.
\end{proof}

%%%%%%%%%%%%%%%%%%%%%%%%%%%%%%%%%%%%%%%%%%%%%%%%%%%%%%%%%%%%%%%%%%%%%%%%%%%%%%%%%%%%%%%%%%%
%%%%%%%%%%%%%%%%%%%%%%%%%%%%%%%%%%%%%%%%%%%%%%%%%%%%%%%%%%%%%%%%%%%%%%%%%%%%%%%%%%%%%%%%%%%

\bibliography{fus}{}

\def\polhk#1{\setbox0=\hbox{#1}{\ooalign{\hidewidth
  \lower1.5ex\hbox{`}\hidewidth\crcr\unhbox0}}}
\begin{thebibliography}{10}

\bibitem{ad}
N.~Andruskiewitsch and J.~Devoto.
\newblock Extensions of {H}opf algebras.
\newblock {\em Algebra i Analiz}, 7(1):22--61, 1995.

\bibitem{Av}
D.~Avitzour.
\newblock Noncommutative topological dynamics. ii.
\newblock {\em Trans. Amer. Math. Soc.}, 282(1):121--135, 1982.

\bibitem{Ban97}
Teodor Banica.
\newblock Le groupe quantique compact libre {${\rm U}(n)$}.
\newblock {\em Comm. Math. Phys.}, 190(1):143--172, 1997.

\bibitem{Ban05}
Teodor Banica.
\newblock Quantum automorphism groups of small metric spaces.
\newblock {\em Pacific J. Math.}, 219(1):27--51, 2005.

\bibitem{bs}
George~M. Bergman and Saharon Shelah.
\newblock Closed subgroups of the infinite symmetric group.
\newblock {\em Algebra Universalis}, 55(2-3):137--173, 2006.
\newblock Special issue dedicated to Walter Taylor.

\bibitem{BhoGos09}
Jyotishman Bhowmick and Debashish Goswami.
\newblock Quantum group of orientation-preserving {R}iemannian isometries.
\newblock {\em J. Funct. Anal.}, 257(8):2530--2572, 2009.

\bibitem{BSS15}
Jyotishman Bhowmick, Adam Skalski, and Piotr~M. So\l~tan.
\newblock Quantum group of automorphisms of a finite quantum group.
\newblock {\em J. Algebra}, 423:514--537, 2015.

\bibitem{Bic03}
Julien Bichon.
\newblock Quantum automorphism groups of finite graphs.
\newblock {\em Proc. Amer. Math. Soc.}, 131(3):665--673 (electronic), 2003.

\bibitem{DK}
Mathijs~S. Dijkhuizen and Tom~H. Koornwinder.
\newblock C{QG} algebras: a direct algebraic approach to compact quantum
  groups.
\newblock {\em Lett. Math. Phys.}, 32(4):315--330, 1994.

\bibitem{fmp}
P.~{Fima}, K.~{Mukherjee}, and I.~{Patri}.
\newblock {On compact bicrossed products}.
\newblock {\em arXiv:1504.00092}, March 2015.

\bibitem{Gos12}
D.~{Goswami}.
\newblock {Existence of quantum isometry group for a class of compact metric
  spaces}.
\newblock {\em arXiv:1205.6099}, May 2012.

\bibitem{gj}
D.~{Goswami} and S.~{Joardar}.
\newblock {Rigidity of action of compact quantum groups on compact, connected
  manifolds}.
\newblock {\em arXiv:1309.1294}, September 2013.

\bibitem{Gr}
W.L. Green.
\newblock Topological dynamics and $c^\ast$-algebras.
\newblock {\em Trans. Amer. Math. Soc.}, 210:107--121, 1975.

\bibitem{hlm}
Paul~R. Halmos.
\newblock On automorphisms of compact groups.
\newblock {\em Bull. Amer. Math. Soc.}, 49:619--624, 1943.

\bibitem{Han93}
David Handelman.
\newblock Representation rings as invariants for compact groups and limit ratio
  theorems for them.
\newblock {\em Internat. J. Math.}, 4(1):59--88, 1993.

\bibitem{Iwa}
K.~Iwasawa.
\newblock On some types of topological groups.
\newblock {\em Ann. of Math.(2)}, 50:507--558, 1949.

\bibitem{jw95}
Wojciech Jaworski.
\newblock Strong approximate transitivity, polynomial growth, and spread out
  random walks on locally compact groups.
\newblock {\em Pacific J. Math.}, 170(2):517--533, 1995.

\bibitem{jw12}
Wojciech Jaworski.
\newblock Contraction groups, ergodicity, and distal properties of
  automorphisms of compact groups.
\newblock {\em Illinois J. Math.}, 56(4):1023--1084, 2012.

\bibitem{KSS16}
Pawel {Kasprzak}, Adam {Skalski}, and Piotr~M. {So\l tan}.
\newblock {The canonical central exact sequence for locally compact quantum
  groups}.
\newblock {\em arXiv:1508.02943}, 2016.

\bibitem{KSW15}
Pawel Kasprzak, Piotr~M. Soltan, and Stanis\l aw~L. Woronowicz.
\newblock Quantum automorphism groups of finite quantum groups are classical.
\newblock {\em J. Geom. Phys.}, 89:32--37, 2015.

\bibitem{ktz}
Yitzhak Katznelson.
\newblock Ergodic automorphisms of {$T^{n}$} are {B}ernoulli shifts.
\newblock {\em Israel J. Math.}, 10:186--195, 1971.

\bibitem{ktch-sch}
Bruce Kitchens and Klaus Schmidt.
\newblock Automorphisms of compact groups.
\newblock {\em Ergodic Theory Dynam. Systems}, 9(4):691--735, 1989.

\bibitem{ktch}
Bruce~P. Kitchens.
\newblock Expansive dynamics on zero-dimensional groups.
\newblock {\em Ergodic Theory Dynam. Systems}, 7(2):249--261, 1987.

\bibitem{MP16}
Kunal Mukherjee and Issan Patri.
\newblock Automorphisms of compact quantum groups.
\newblock {\em Submitted}, 2016.

\bibitem{nat}
Sonia Natale.
\newblock Hopf algebra extensions of group algebras and {T}ambara-{Y}amagami
  categories.
\newblock {\em Algebr. Represent. Theory}, 13(6):673--691, 2010.

\bibitem{iss}
Issan Patri.
\newblock Normal subgroups, center and inner automorphisms of compact quantum
  groups.
\newblock {\em Internat. J. Math.}, 24(9):1350071, 37, 2013.

\bibitem{htz}
Federico Rodriguez~Hertz.
\newblock Stable ergodicity of certain linear automorphisms of the torus.
\newblock {\em Ann. of Math. (2)}, 162(1):65--107, 2005.

\bibitem{Se68}
Graeme Segal.
\newblock The representation ring of a compact lie group.
\newblock {\em Inst. Hautes Études Sci. Publ. Math.}, 34(1):113--128, 1968.

\bibitem{DaeWan96}
Alfons Van~Daele and Shuzhou Wang.
\newblock Universal quantum groups.
\newblock {\em Internat. J. Math.}, 7(2):255--263, 1996.

\bibitem{Wan95_1}
Shuzhou Wang.
\newblock Free products of compact quantum groups.
\newblock {\em Comm. Math. Phys.}, 167(3):671--692, 1995.

\bibitem{Wan95_2}
Shuzhou Wang.
\newblock Tensor products and crossed products of compact quantum groups.
\newblock {\em Proc. London Math. Soc. (3)}, 71(3):695--720, 1995.

\bibitem{Wan98}
Shuzhou Wang.
\newblock Quantum symmetry groups of finite spaces.
\newblock {\em Comm. Math. Phys.}, 195(1):195--211, 1998.

\bibitem{Wor87}
S.~L. Woronowicz.
\newblock Compact matrix pseudogroups.
\newblock {\em Comm. Math. Phys.}, 111(4):613--665, 1987.

\bibitem{Wor98}
S.~L. Woronowicz.
\newblock Compact quantum groups.
\newblock In {\em Sym\'etries quantiques ({L}es {H}ouches, 1995)}, pages
  845--884. North-Holland, Amsterdam, 1998.

\bibitem{ydv}
Manoj~K. Yadav.
\newblock Class preserving automorphisms of finite {$p$}-groups: a survey.
\newblock In {\em Groups {S}t {A}ndrews 2009 in {B}ath. {V}olume 2}, volume 388
  of {\em London Math. Soc. Lecture Note Ser.}, pages 569--579. Cambridge Univ.
  Press, Cambridge, 2011.

\end{thebibliography}
\bibliographystyle{plain}
%\addcontentsline{toc}{section}{References}

\Addresses

\end{document}